\newtheorem{theorem}{Theorem}[section]
\newtheorem{lemma}[theorem]{Lemma}
\newtheorem{corollary}[theorem]{Corollary}
\newtheorem{claim}[theorem]{Claim}
\newtheorem{remark}[theorem]{Remark}
\theoremstyle{definition}
\newtheorem{definition}[theorem]{Definition}
\newcommand{\etal}{\textit{et al}. }
\def\defeq{\stackrel{\mathrm{def}}{=}}
\newcommand{\one}{\mathbf{1}}
\newcommand{\zero}{\mathbf{0}}
\newcommand{\wt}[1]{{\widetilde{#1}}}
\newcommand{\wb}[1]{{\widebar{#1}}}
\newcommand{\wh}[1]{{\widehat{#1}}}
\newcommand{\tsum}{\textstyle \sum}
\providecommand{\dox}[2]{\frac{\partial #1}{\partial #2}}
\providecommand{\at}[3]{\left.#1\right\vert_{#2}^{#3}}
\DeclarePairedDelimiter\abs{\lvert}{\rvert}
\DeclarePairedDelimiter{\bracket}{ [ }{ ] }
\DeclarePairedDelimiter{\paran}{(}{)}
\DeclarePairedDelimiter{\braces}{\lbrace}{\rbrace}
\DeclarePairedDelimiterX{\gnorm}[2]{\lVert}{\rVert_{#2}}{#1}
\DeclarePairedDelimiter{\ceil}{\lceil}{\rceil}
\DeclareMathOperator*{\argmax}{argmax}
\DeclareMathOperator*{\inte}{int}
\DeclareMathOperator*{\reinte}{relint}
\DeclareMathOperator*{\cl}{cl}
\DeclareMathOperator{\sota}{\succeq_i}
\DeclareMathOperator{\nnz}{nnz}
\DeclareMathOperator{\proj}{proj}
\def\eps{\varepsilon}
\def\BB{\mathcal{B}}
\def\LL{\mathcal{L}}
\def\WW{\mathcal{W}}
\def\Rbb{\mathbb{R}}
\begin{document}

\title{Faster Width-dependent Algorithm for Mixed Packing and Covering LPs}

\author{%
	Digvijay~Boob \\
	Georgia Tech\\
	Atlanta, GA \\
	\texttt{digvijaybb40@gatech.edu}
	\And
	Saurabh~Sawlani \\
	Georgia Tech\\
	Atlanta, GA \\
	\texttt{sawlani@gatech.edu}
	\And
	Di~Wang\thanks{Work done when author was at Georgia Tech.} \\
	Google AI\\
	Atlanta, GA \\
	\texttt{wadi@google.com}
}

\maketitle


\begin{abstract}
	In this paper, we give a faster width-dependent algorithm for mixed packing-covering LPs.
	Mixed packing-covering LPs are fundamental to combinatorial optimization in computer science and operations research.
	Our algorithm finds a $1+\eps$ approximate solution in time $O(Nw/ \eps)$, where $N$ is number of nonzero entries in the constraint matrix,
	and $w$ is the maximum number of nonzeros in any constraint.
	This run-time is better than Nesterov's smoothing algorithm which requires $O(N\sqrt{n}w/ \eps)$ where $n$ is the dimension of the problem.
	Our work utilizes the framework of area convexity introduced in [Sherman-FOCS'17]
	to obtain the best dependence on $\eps$
	while breaking the infamous $\ell_{\infty}$ barrier to eliminate the factor of $\sqrt{n}$.
	The current best width-independent algorithm for this problem runs in time $O(N/\eps^2)$ [Young-arXiv-14]
	and hence has worse running time dependence on $\eps$.
	Many real life instances of the mixed packing-covering problems exhibit small width and for such cases, our algorithm can report higher precision results when compared to width-independent algorithms.
	As a special case of our result, we report a $1+\eps$ approximation algorithm for the densest subgraph problem which runs in time $O(md/ \eps)$, where $m$ is the number of edges in the graph and $d$ is the maximum graph degree.
\end{abstract}




\pagenumbering{arabic}


\section{Introduction} \label{sec:intro}

Mixed packing and covering linear programs (LPs) are
a natural class of LPs
where coefficients, variables, and constraints are
non-negative.
They model a wide range of important problems
in combinatorial optimization and
operations research.
In general, they model any problem
which contains a limited set of available
resources (packing constraints)
and a set of demands to fulfill (covering constraints).

Two special cases of the problem
have been widely studied in literature: pure \emph{packing},
formulated as $\max_x \{b^T x \mid Px \leq p \}$;
and pure \emph{covering}, formulated as
$\min_x \{b^T x \mid Cx \geq c \}$ where $P,p,C, c, b$
are all non-negative.
These are known to model fundamental problems
such as maximum bipartite graph matching, minimum set cover,  etc \cite{LubyN93}.
Algorithms to solve packing and covering LPs
have also been applied to great effect in designing
flow control systems \cite{BartalBR04},
scheduling problems \cite{PlotkinST95}, zero-sum matrix games \cite{Nesterov05}
and in mechanism design \cite{ZurelN01}.
In this paper, we study the mixed packing and covering (MPC) problem, formulated as checking the feasibility of the set:
$\{x \mid Px \leq p, Cx \geq c\}$,
where $P,C,p,c$ are non-negative.
We say that $x$ is an $\eps$-approximate solution to MPC
if it belongs to the relaxed set
$\{x \mid Px \leq (1+\eps)p, Cx \geq (1-\eps)c\}$.
MPC is a generalization of pure packing and pure covering,
hence it is applicable to a wider range of problems
such as multi-commodity flow on graphs \cite{Young01, Sherman17},
non-negative linear systems and X-ray tomography \cite{Young01}.

General LP solving techniques such as the interior point method
can approximate solutions to MPC
in as few as $O(\log(1/\eps))$ iterations - however,
they incur a large per-iteration cost.
In contrast, iterative approximation algorithms
based on first-order optimization methods require $\text{poly}(1/\eps)$ iterations,
but the iterations are fast and in most cases are conducive to efficient parallelization.
This property is of utmost importance in the context
of ever-growing datasets and the availability of
powerful parallel computers, resulting in much faster algorithms
in relatively low-precision regimes.

\subsection{Previous work}

In literature, algorithms for the MPC problem
can be grouped into two broad categories:
\emph{width-dependent} and \emph{width-independent}.
Here, \emph{width} is an intrinsic property of a linear program
which typically
depends on the dimensions and the largest entry of the constraint matrix,
and is an indication of the range of values any constraint
can take.
In the context of this paper and the MPC problem, we define $w_P$ and $w_C$ as the maximum
number of non-zeros in any constraint in $P$ and $C$ respectively.
We define the width of the LP as $w_{\max} \defeq \max(w_P, w_C)$.

One of the first approaches used to solve LPs was Langrangian-relaxation:
replacing hard constraints with loss functions
which enforce the same constraints indirectly.
Using this approach, Plotkin, Schmoys and Tardos \cite{PlotkinST95}
and Grigoriadis and Khachiyan \cite{GrigoriadisK96}
obtained width-dependent polynomial-time approximation algorithms for MPC.
Luby and Nisan \cite{LubyN93} gave the first
width-dependent parallelizable algorithm for pure packing and pure covering,
which ran in $\widetilde{O}(\eps^{-4})$ parallel time,
and $\widetilde{O}(N\eps^{-4})$ total work.
Here, \emph{parallel time} (sometimes termed as \emph{depth}) refers to the longest
chain of dependent operations, and \emph{work} refers to the
total number of operations in the algorithm.

Young \cite{Young01} extended this technique to give
the first width-independent parallel algorithm
for MPC in $\widetilde{O}(\eps^{-4})$ parallel time,
and $\widetilde{O}(m d \eps^{-2})$ total work\footnote{$d$ here
is the maximum number of constraints that any variable appears in.}.
Young \cite{Young14} later improved his algorithm
to run using total work $O(N \eps^{-2})$.
Mahoney \etal \cite{MahoneyRWZ16} later gave an algorithm with a faster parallel
run-time of $\widetilde{O}(\eps^{-3})$.

The other most prominent approach in literature towards solving an LP is by converting it into a smooth function \cite{Nesterov05},
and then applying general first-order optimization techniques \cite{Nesterov05, Nesterov12}.
Although the dependence on $\eps$ from using first-order
techniques is much improved,
it usually comes at the cost of sub-optimal dependence on the input size and width.
For the MPC problem,
Nesterov's accelerated method \cite{Nesterov12}, as well as Bienstock and Iyengar's adaptation \cite{BienstockI06} of Nesterov's smoothing \cite{Nesterov05},
give rise to algorithms with runtime linearly depending on $\eps^{-1}$,
but with far from optimal dependence on input size and width.
For pure packing and pure covering problems, however,
Allen-Zhu and Orrechia \cite{AllenZhuO19} were the first to incorporate Nesterov-like acceleration while still being able to obtain near-linear width-independent runtimes, giving a $\widetilde{O}(N\eps^{-1})$ time algorithm for the packing problem.
For the covering problem, they gave a
$\widetilde{O}(N\eps^{-1.5})$ time algorithm, which was then improved to $\widetilde{O}(N\eps^{-1})$ by \cite{WangRM16}.
Importantly, however, the above algorithms do not generalize to MPC.

\subsection{Our contributions}

We give the best parallel width-dependent algorithm for MPC,
while only incurring a linear dependence on $\eps^{-1}$
in the parallel runtime and total work.
Additionally, the total work has near-linear dependence on
the input-size.
Formally, we state our main theorem as follows.

\begin{theorem} \label{thm:main}
There exists a parallel $\eps$-approximation algorithm for the mixed packing covering
problem, which runs in $\widetilde{O}(w \cdot \eps^{-1})$ parallel time,
while performing $\widetilde{O}(w \cdot N \cdot \eps^{-1})$ total work,
where $N$ is the total number of non-zeros in the constraint matrices,
and $w$ is the width of the given LP.
\end{theorem}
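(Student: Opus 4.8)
The plan is to reformulate the MPC feasibility problem as a convex-concave saddle-point problem and solve it using the area-convexity framework of Sherman. First I would reduce the feasibility question $\{x \mid Px \le p,\ Cx \ge c\}$ to a single min-max objective: after rescaling rows so that $p = \one$ and $c = \one$, feasibility is equivalent to asking whether $\min_{x \ge 0} \max\{\max_i (Px)_i,\ \max_j (1 - (Cx)_j)\} \le 1$. Writing the two inner maxima as linear optimization over the simplex, this becomes a bilinear saddle-point problem $\min_{x \in \Delta} \max_{(y,z) \in \Delta \times \Delta} \ \langle y, Px\rangle + \langle z, \one - Cx\rangle$, where $x$ ranges over an appropriately scaled simplex (so that we may restrict to bounded $x$ without loss of generality, using the packing constraints to bound each coordinate). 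The $\eps$-approximation guarantee for MPC then follows once we compute a point whose duality gap on this saddle problem is at most $\eps$.

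Next I would instantiate Sherman's area-convexity machinery for this particular bilinear form. The key object is a regularizer $\phi$ on the product domain that is \emph{area-convex} with respect to the bilinear coupling $\langle y, Px\rangle - \langle z, Cx\rangle$; area convexity is the right relaxation of joint convexity that still makes the regularized problem solvable by a simple iterative scheme, and crucially it lets us use entropy-like regularizers (which have $\ell_\infty \to \ell_1$ geometry) rather than Euclidean ones, which is what avoids the $\sqrt{n}$ loss. I would verify that a suitable combination of (weighted) negative entropies on the three simplices, with weights calibrated to the width $w$, is area-convex for the scaled bilinear map; the scaling by $w$ enters precisely because each row of $P$ and $C$ has at most $w$ nonzeros, which controls the relevant operator norm when measured in the $\ell_\infty/\ell_1$ pairing. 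Then Sherman's iteration (an alternating-minimization / mirror-prox-style update) run for $\widetilde O(w/\eps)$ steps produces an $\eps$-accurate saddle point.

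Finally I would account for the per-iteration cost and parallel depth. Each iteration requires a constant number of matrix-vector products with $P$, $C$ and their transposes — total work $O(N)$ per iteration — plus coordinate-wise operations (exponentials, normalizations) that are trivially parallelizable; the inner proximal subproblem induced by the area-convex regularizer must be shown to be solvable (to sufficient accuracy) in $\widetilde O(N)$ work and polylogarithmic depth, which is where I would invoke the explicit structure of the entropy regularizer and a short inner Newton-type or fixed-point loop. Multiplying $\widetilde O(w/\eps)$ iterations by $\widetilde O(N)$ work per iteration gives the claimed $\widetilde O(wN/\eps)$ total work, and the parallel time is $\widetilde O(w/\eps)$ since each iteration has polylog depth. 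The main obstacle I anticipate is the area-convexity verification: one must exhibit the regularizer and prove the $3\times 3$ (or $2\times 2$ block) curvature condition — essentially a positive-semidefiniteness statement involving the Hessian of $\phi$ and the bilinear form — with the correct $w$-dependence, and then show the associated proximal step is efficiently and accurately computable in parallel. Getting both the constant in the area-convexity inequality and the inner-solver accuracy to compose cleanly into the final $1+\eps$ guarantee is the delicate part; the rest is bookkeeping on the mirror-descent regret bound and the reduction.
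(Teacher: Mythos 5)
Your high-level plan (saddle-point reformulation, Sherman's area convexity, alternating inner oracle, $O(w/\eps)$ outer iterations $\times$ $\widetilde O(N)$ work per iteration) matches the paper's approach, but two specific steps in your sketch are wrong or miss the crux of the argument.

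First, the domain for $x$. You write the saddle point as $\min_{x \in \Delta}\max_{(y,z)}$ ``where $x$ ranges over an appropriately scaled simplex.'' It does not: after the normalization that makes every matrix entry at most $1$, the correct constraint on $x$ is the nonnegative $\ell_\infty$ ball $\BB_{+,\infty}^n(1)$, not a simplex. This is not a cosmetic distinction. If $x$ really lived on a simplex, ordinary entropy regularization would give you $\rho = O(\log n)$ and accelerated methods would finish in $\widetilde O(1/\eps)$ iterations with no width dependence at all; there would be no $\ell_\infty$ barrier and no need for area convexity. The whole point is that strongly convex regularizers on an $\ell_\infty$ ball necessarily have range $\Omega(\sqrt{n})$ (Lemma~\ref{lem:inf-lb}), and area convexity is invoked precisely to escape this on the box, not on a simplex.

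Second, and related, your candidate regularizer --- ``a suitable combination of (weighted) negative entropies on the three simplices'' --- is block-separable across $(x,y,z)$, and a block-diagonal Hessian cannot dominate $M_H$ (in the $\sota$ sense of Lemma~\ref{lem:suffciency-area_con}) without falling back to strong convexity in each block, which reintroduces the $\sqrt{n}$ loss on the $x$-block. The construction that actually works, Theorem~\ref{thm:are-con-phi}, is built from the two-variable gadget $\gamma_\beta(a,b) = ba\log a + \beta b\log b$ summed with weights $P_{ij}$ and $C_{ij}$: the cross term $b\,a\log a$ deliberately couples each $x_j$ to each $y_i$ (resp.\ $z_i$) through the matrix entries, which is exactly what makes $d^2\phi \sota M_H$ hold via Lemma~\ref{lem:gadget_fun} while keeping the range down to $O(\gnorm{P}{\infty}\log p + \gnorm{C}{\infty}\log c) = O(w\log(pc))$ (Lemma~\ref{lem:bounding_phi}). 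Your sketch correctly anticipates that ``the scaling by $w$ enters'' through row $\ell_1$ norms, but without the coupled gadget you have no mechanism to certify area convexity with that small a range, so the step ``verify that a suitable combination of entropies is area-convex'' would fail. The remaining bookkeeping you describe (alternating proximal oracle with closed-form entropy updates, $\widetilde O(\log(1/\delta))$ inner iterations, $O(N)$ work and polylog depth per matrix-vector product) does line up with Lemmas~\ref{lem:beck} and~\ref{lem-oso-final}.
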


Table~\ref{tab:runtimes} compares the running time of our algorithm
to previous works solving this problem (or its special cases).

\begin{table}[!htb]
	\begin{center}
		\caption{Comparison of runtimes of $\eps$-approximation algorithms for the mixed packing covering problem.}
		\label{tab:runtimes}
		\begin{tabular}{c|c|c|c} 
			\textbf{} & \textbf{Parallel Runtime} & \textbf{Total Work} & \textbf{Comments}\\
			\hline \hline \\[-1em]
			Young \cite{Young01} & $\widetilde{O}(\eps^{-4})$ & $\widetilde{O}(m d \eps^{-2})$ & $d$ is column-width\\
			\hline \\[-1em]
			Bienstock and Iyengar \cite{BienstockI06} &  & $\widetilde{O}(n^{2.5} w_P^{1.5} w \eps^{-1})$ & width-dependent\\
			\hline \\[-1em]
			Nesterov \cite{Nesterov12} & $\widetilde{O}(w \sqrt{n} \eps^{-1})$ & $\widetilde{O}(w \cdot N \sqrt{n} \eps^{-1})$ & width-dependent\\
			\hline \\[-1em]
			Young \cite{Young14} & $\widetilde{O}(\eps^{-4})$ & $\widetilde{O}(N \eps^{-2})$ &\\
			\hline \\[-1em]
			Mahoney \etal \cite{MahoneyRWZ16} & $\widetilde{O}(\eps^{-3})$ & $\widetilde{O}(N \eps^{-3})$ &\\
			\hline \\[-1em]
			This paper & $\widetilde{O}(w \eps^{-1})$ & $\widetilde{O}( w N \eps^{-1})$ & width-dependent\\
			\hline
		\end{tabular}
	\end{center}
\end{table}

Sacrificing width independence for faster convergence with respect to precision
proves to be a valuable trade-off for several combinatorial optimization
problems which naturally have a low width.
Prominent examples of such problems which are not pure packing or covering
problems include \emph{multicommodity flow} and \emph{densest subgraph},
where the width is bounded by the degree of a vertex.
In a large number of real-world graphs, the maximum vertex degree is usually
small, hence our algorithm proves to be much faster when we want high-precision
solutions. We explicitly show that this result directly gives
the fastest algorithm for the densest subgraph problem on low-degree graphs in Appendix~\ref{sec:experiments}.
\section{Notation and Definitions} \label{sec:notations}
For any integer $q$, we represent using $\gnorm{\cdot}{q}$
the $q$-norm of any vector.
We represent the infinity-norm as $\gnorm{\cdot}{\infty}$.
We denote the infinity-norm ball (sometimes called the $\ell_{\infty}$ ball) as the set $\BB_{\infty}^n(r) \defeq \{ x \in \Rbb^n: \gnorm{x}{\infty} \le r \}$. The nonnegative part of this ball is denoted as $\BB_{+,\infty}^n(r) = \{x \in \Rbb^n: x \ge \zero_{n}, \gnorm{x}{\infty} \le r \}$. For radius $r = 1$, we drop the radius specification and use a short notation $\BB_\infty^n$ and $\BB_{+,\infty}^n$.  We denote the extended simplex of dimension $k$ as $\Delta_k^+ \defeq \{ x \in \Rbb^k: \sum_{i = 1}^k x_i \le 1 \}$. For any $y \ge \zero_k$, $\proj_{\Delta_k^+}(y) = y / \gnorm{y}{1}$ if $\gnorm{y}{1} \ge 1$.
Further, for any set $K$, we represent its interior, relative interior and closure as $\inte(K), \reinte(K)$ and $\cl(K)$, respectively. Function $\exp$ is applied to a vector element wise. Division of two vectors of same dimension is also performed element wise.\\
For any matrix $A$, we use $\nnz(A)$ to denote the number of nonzero entries in it.
We use $A_{i,:}$ and $A_{:, j}$ to refer to the $i$th row and $j$th column of $A$ respectively. We use notation $A_{ji}$ or $A_{j,i}$ alternatively to denote element in $j$-th row and $i$-th column of matrix $A$.
$\gnorm{A}{\infty}$ denotes the operator norm $\gnorm{A}{\infty \to \infty} \defeq \sup_{x \neq 0}  \frac{\gnorm{Ax}{\infty}}{\gnorm{x}{\infty}} $.
For a symmetric matrix $A$ and an antisymmetric matrix $B$, we define an operator $\sota$ as 
$A \sota B \Leftrightarrow \begin{bmatrix} A &-B \\ B &A\end{bmatrix}$ is positive semi-definite.
We formally define an $\eps$-approximate solution to the mixed packing-covering
(MPC) problem as follows.
\begin{definition}
	We say that $x$ is an $\eps$-approximate solution of the mixed packing-covering problem if $x$ satisfies $x \in \BB_{+,\infty}^n(1)$, $Px \le (1+\eps) \one_p$ and $Cx \ge (1-\eps)\one_c$.
\end{definition}
Here, $\one_k$ denotes a vectors of $1$'s of dimension $k$ for any integer $k$.\\
The saddle point problem on two sets $x \in X$ and $y \in Y$ can be defined as follows:
\begin{equation}\label{eq:saddle_notn}
\min_{x \in X} \max_{y \in Y } \LL(x,y)
\end{equation}
where $\LL(x,y)$ is some bilinear form between $x$ and $y$.
For this problem, we define the \emph{primal-dual gap function}
as $\sup_{(\wb{x}, \wb{y}) \in X \times Y} \LL(x, \wb{y})  - \LL(\wb{x}, y)$.
This gap function can be used as measure of accuracy of the above saddle point solution.
\begin{definition}
We say that  $(x,y) \in X \times Y$ is an $\eps$-optimal solution for \eqref{eq:saddle_notn} if $\sup_{(\wb{x}, \wb{y}) \in X \times Y} \LL(x, \wb{y})  - \LL(\wb{x}, y) \le \eps$.
\end{definition}
\section{Technical overview} \label{sec:prelims}
The mixed packing-covering (MPC) problem is formally defined as follows.

\fbox{\parbox{\textwidth}{Given two nonnegative matrices $P \in \Rbb^{p \times n},\ C \in \Rbb^{c \times n}$, find an $x \in \Rbb^n, x \ge \zero, \gnorm{x}{\infty} \le 1$ such that $Px \le \one_p$ and $Cx \ge \one_c$ if it exists, otherwise report infeasibility.}}

Note that the vector of $1$'s on the right hand side of packing and covering constraints
can be obtained by simply scaling each constraint appropriately.
We also assume that each entry in the matrices $P$ and $C$ is at most one.
This assumption, and subsequently the $\ell_\infty$ constraints on $x$ also cause no loss of generality.\footnote{This transformation can be achieved by adapting techniques from \cite{WangRM16} while increasing dimension of the problem up to a logarithmic factor.
Details of this fact are in the Appendix \ref{sec:pb-normalization}
in the full paper (supplementary file).
For the purpose of the main text, we work with this assumption.}

We reformulate MPC as a saddle point problem, as defined in Section~\ref{sec:notations};
\begin{equation}\label{eq:saddle_form}
\lambda^* \defeq \min_{x \in \BB_{+,\infty}^n} \ \ \max_{y \in \Delta^+_c,\ z \in \Delta^+_p } \ L(x,y,z),
\end{equation}
where  $L(x,y,z) := [ y^T \ z^T] \begin{bmatrix}P &-\one_p\\ -C &\one_c\end{bmatrix}\begin{bmatrix}
x \\ 1\end{bmatrix}$.
 The relation between the two formulation is shown in Section \ref{sec:area-convexity}.
For the rest of the paper, we focus on the saddle point formulation \eqref{eq:saddle_form}.\\ $\eta(x) \defeq \max_{y \in \Delta^+_c,  z \in \Delta^+_p }L(x,y,z)$ is a piecewise linear convex function.
Assuming oracle access to this ``inner" maximization problem,
the ``outer" problem of minimizing $\eta(x)$ can be performed using
first order methods like mirror descent, which are suitable
when the underlying problem space is the unit $\ell_{\infty}$ ball.
One drawback of this class of methods is that their rate of convergence,
which is standard for non-accelerated first order methods on non-differentiable objectives,
is $O(\frac{1}{\eps^2})$ to obtain an $\eps$-approximate minimizer $x$
of $\eta$ which satisfies $\eta(x) \le \eta^* + \eps$, where $\eta^*$ is the optimal value.
This means that the algorithm needs to access the inner maximization oracle
$O(\frac{1}{\eps^2})$ times,
which can become prohibitively large in the high precision regime.

Note that even though $\eta$ is a piecewise linear non-differentiable function, it is not a black box function, but a maximization linear functions in $x$.
This structure can be exploited using Nesterov's smoothing technique \cite{Nesterov05}.
In particular, $\eta(x)$ can be approximated by choosing a strongly convex\footnotemark[3] function $\phi: \Delta_p^+ \times \Delta_c^+ \to \Rbb$ and considering
\[
\wt{\eta} (x)= \max_{y \in \Delta^+_c,  z \in \Delta^+_p } L(x,y,z) - \phi(y,z).
\]
This strongly convex regularization yields that $\wt{\eta}$ is a Lipschitz-smooth\footnote{Definitions of Lipschitz-smoothness and strong convexity can be found in many texts in nonlinear programming and machine learning. e.g. \cite{bubeck2014theory}. Intuitively, $f$ is Lipschitz-smooth if the rate of change of $\nabla f$ can be bounded by a quantity known as the ``constant of Lipschitz smoothness''.} convex function.
If $L$ is the constant of Lipschitz smoothness of $\wt{\eta}$ then application of any of the accelerated gradient methods in literature will converge in $O(\sqrt{\nicefrac{L}{\eps}})$ iterations.
Moreover, it can also be shown that in order to construct a smooth $\eps$-approximation $\wt{\eta}$ of $\eta$,
the Lipschitz smoothness constant $L$ can be chosen to be of the order $O(\nicefrac{1}{\eps})$,
which in turn implies an overall convergence rate of $O(\nicefrac{1}{\eps})$.
In particular, Nesterov's smoothing achieves an oracle complexity of $O({(\gnorm{P}{\infty}+\gnorm{C}{\infty})  D_x \max\{D_y, D_z\}} {\eps}^{-1} )$, where
where $D_x$, $D_y$ and $D_z$ denote the sizes of the ranges of their respective
regularizers which are strongly convex functions.
$D_y$ and $D_z$ can be made of the order of $\log{p}$ and $\log{c}$, respectively.
However, $D_x$ can be problematic since $x$ belongs to an $\ell_{\infty}$ ball.
More on this will soon follow.\\
Nesterov's dual extrapolation algorithm\cite{Nesterov07} gives a very similar complexity but
is a different algorithm in that it directly addresses the saddle point formulation \eqref{eq:saddle_form} rather than viewing the problem as
optimizing a non-smooth function $\eta$.
The final convergence for the dual extrapolation algorithm is given in terms of the \emph{primal-dual gap} function of the saddle point problem \eqref{eq:saddle_form}.
This algorithms views the saddle point problem as solving variational inequality for an appropriate monotone operator in joint domain $(x,y,z)$. 
Moreover, as opposed to smoothing techniques which only regularize the dual, this algorithm regularizes both primal and dual parts, hence is a different scheme altogether.

Note that for both schemes mentioned above, the maximization oracle itself has an analytical expression which involves matrix-vector multiplication.
Hence each call to the oracle incurs a sequential run-time of $\nnz(P)+\nnz(C)$.
Then, overall complexity for both schemes is of order $O({(\nnz(P)+\nnz(C))(\gnorm{P}{\infty}+\gnorm{C}{\infty})  D_x\max\{D_y, D_z\}}{\eps}^{-1})$.
\subsubsection*{The $\ell_\infty$ barrier}
Note that the  both methods, i.e.,
Nesterov's smoothing and dual extrapolation, involves a $D_x$ term, which denotes the range of a convex function over the domain of $x$.
The following lemma states a lower bound for this range in case of $\ell_{\infty}$ balls.

\begin{lemma}\label{lem:inf-lb}
Any strongly convex function has a range of at least $\Omega(\sqrt{n})$
on any $\ell_{\infty}$ ball.
\end{lemma}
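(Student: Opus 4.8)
The plan is to establish the lower bound by exhibiting, for any strongly convex function $\phi$ on an $\ell_\infty$ ball, two points whose midpoint is forced far below the average of the function values, and then arguing that this forces a large spread of values. Concretely, I would work with the ball $\BB_\infty^n$ (scaling handles general radius) and recall the definition: $\phi$ is strongly convex with some modulus $\sigma>0$ with respect to a norm $\gnorm{\cdot}{}$, meaning $\phi(\tfrac{u+v}{2}) \le \tfrac12\phi(u)+\tfrac12\phi(v) - \tfrac{\sigma}{8}\gnorm{u-v}{}^2$ for all $u,v$ in the domain. The key observation is that the statement as phrased must implicitly normalize the modulus (otherwise scaling $\phi$ rescales the range); the natural normalization in this context, coming from Nesterov's smoothing analysis, is $\sigma = 1$ with respect to the $\ell_\infty$ norm itself, i.e.\ $1$-strong convexity in $\gnorm{\cdot}{\infty}$.

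First I would pick antipodal points: take $u$ and $v$ to be two opposite vertices of the cube, say $u = \one_n$ and $v = -\one_n$, so that $\gnorm{u-v}{\infty} = 2$ and their midpoint is the origin. Strong convexity then gives $\phi(\zero) \le \tfrac12\phi(\one_n) + \tfrac12\phi(-\one_n) - \tfrac{\sigma}{8}\cdot 4 = \tfrac12\phi(\one_n)+\tfrac12\phi(-\one_n) - \tfrac{\sigma}{2}$. This already yields a range of $\Omega(1)$ but not $\Omega(\sqrt n)$; to get the $\sqrt n$ I need to iterate this bisection argument across all $n$ coordinate directions, or equivalently average over all $2^n$ vertices. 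The cleaner route is: let $R = \max_{x\in\BB_\infty^n}\phi(x) - \min_{x\in\BB_\infty^n}\phi(x)$, and show $\phi(\zero)$ (or any fixed central point) must lie at least $\Omega(\sqrt n)$ below the maximum. Apply strong convexity recursively: write $\zero$ as the midpoint of $e$-flips, building a telescoping chain $x^{(0)}=\one_n, x^{(1)}, \dots, x^{(n)} = -\one_n$ where $x^{(k)}$ differs from $x^{(k-1)}$ by negating one more coordinate; each bisection step contributes a $-\tfrac{\sigma}{8}\gnorm{x^{(k-1)}-x^{(k)}}{\infty}^2 = -\tfrac{\sigma}{2}$ term but with geometrically shrinking weight, so a direct telescope only gives $O(1)$.

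Therefore the right argument is the probabilistic/averaging one: sample $X$ uniformly from the $2^n$ cube vertices and use the standard fact (a discrete analogue of the "strongly convex functions grow quadratically from their minimizer") that $\expec{\phi(X)} - \phi(\expec{X}) \ge \tfrac{\sigma}{2}\expec{\gnorm{X - \expec{X}}{\infty}^2}$, obtained by recursively splitting the uniform distribution on the cube coordinate by coordinate and summing the per-coordinate bisection deficits (each split in coordinate $i$ contributes $\tfrac{\sigma}{8}\cdot\expec{\gnorm{\cdot}{\infty}^2}$ evaluated on the pair differing in that coordinate, and since all $2n$ relevant difference vectors are $\pm 2 e_i$ with $\ell_\infty$ norm $2$, the deficits add up over the $n$ coordinates). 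Since $\expec{X}=\zero$ and $\gnorm{X-\zero}{\infty} = 1$ always, this gives $\expec{\phi(X)} - \phi(\zero) \ge \tfrac{\sigma}{2} n$ if the modulus is taken w.r.t.\ $\gnorm{\cdot}{\infty}$ — but wait, that overshoots to $\Omega(n)$, which signals that the intended modulus is w.r.t.\ $\gnorm{\cdot}{1}$ or $\gnorm{\cdot}{2}$ normalized so that the diameter-squared over the cube is $\Theta(n)$, making the right-hand side $\Theta(\sqrt n)$ after the bound $R \ge \expec{\phi(X)}-\phi(\zero)$ combined with $R \ge \max\phi - \expec{\phi(X)} \ge 0$. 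The hard part will be pinning down exactly which normalization convention the paper uses for ``strongly convex'' — whether $\sigma$ is measured in $\ell_1$, $\ell_2$, or $\ell_\infty$, and with modulus $1$ — since that single choice is what separates an $\Omega(1)$, $\Omega(\sqrt n)$, or $\Omega(n)$ conclusion; once that is fixed, the coordinate-by-coordinate bisection/averaging computation is routine. I would therefore structure the proof as: (i) state the normalization, (ii) prove the one-dimensional bisection deficit lemma, (iii) tensorize over $n$ coordinates via the uniform-on-cube expectation, (iv) conclude $R = \Omega(\sqrt n)$.
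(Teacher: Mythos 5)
Your core idea — decompose the $\ell_\infty$ cube coordinate by coordinate and accumulate a fixed strong-convexity deficit per coordinate — is the same one the paper uses, but the paper commits to a much simpler instantiation while you get stuck on a non-issue. The paper's proof starts at the origin with $d(\zero)=0$, walks to a far corner one coordinate at a time always moving in the direction of nonnegative slope, and applies the quadratic-growth form of strong convexity
\[
d(\wh{x}) \;\ge\; d(\wb{x}) + \tfrac{1}{2}\gnorm{\wh{x}-\wb{x}}{\infty}^2
\]
at each step (the nonnegative-slope choice kills the linear term). With modulus $1$ in $\ell_\infty$ this yields $\max_{x\in\BB_\infty^n(r)} d(x) \ge \tfrac{nr^2}{2}$, which is $\Omega(n)$ — strictly stronger than the $\Omega(\sqrt n)$ claimed. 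Your midpoint-averaging over cube vertices lands on the same $\Omega(n)$.

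Where your proposal actually goes wrong is in treating that $\Omega(n)$ as evidence that you guessed the wrong normalization. The lemma is a lower bound, so establishing $\Omega(n)$ certainly establishes $\Omega(\sqrt n)$; nothing needs to be ``fixed.'' Instead of concluding, you flip between $\ell_1$, $\ell_2$, $\ell_\infty$ conventions, decide the answer hinges on a choice you cannot resolve, and finish with a four-item plan rather than a proof. That is the genuine gap: the write-up never commits to a normalization and never actually derives the stated bound. The repair is trivial — pick the $\ell_\infty$ modulus (or indeed any of the three standard ones; they all give $\ge \Omega(\sqrt n)$ on the cube) and run your step (ii)--(iii) computation to its end.

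Two smaller points. First, your early attempt at a ``telescoping chain of midpoints with geometrically shrinking weights'' was the wrong formalization (and you correctly abandoned it); the paper avoids that entirely by using quadratic growth from a point rather than the midpoint inequality, which is cleaner. Second, your averaging argument is fine but heavier machinery than needed — the paper's one-coordinate-at-a-time induction needs no expectations and no per-coordinate split lemma, just $n$ applications of strong convexity along a monotone path.
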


Since $D_x \ge \nicefrac{\sqrt{n}}{2\sqrt{2}}$ for each member function of this wide class,
there is no hope of eliminating this $\sqrt{n}$ factor using techniques
involving explicit use of strong convexity.\\
So, the goal now is to find a function with a small range over $\ell_{\infty}$ balls,
but still act as good enough regularizers to enable accelerated convergence of
the descent algorithm.
In pursuit of breaking this $\ell_{\infty}$ barrier, we draw inspiration
from the notion of \emph{area convexity} introduced by Sherman \cite{Sherman17}.
Area convexity is a weaker notion than strong convexity, however, it
is still strong enough to ensure that accelerated first order methods still go through when
using area convex regularizers. Since this is a weaker notion than strong convexity, we can construct area convex functions which have range of $O(n^{o(1)})$ on $\ell_{\infty}$ ball.

First, we define area convexity, and then go on to mention its relevance
to the saddle point problem \eqref{eq:saddle_form}.
Area convexity is a notion defined in context of a matrix $A \in \Rbb^{a \times b}$ and a convex set $K \subseteq \Rbb^{a+b}$. Let $M_A \defeq \begin{bmatrix}
\zero_{b \times b} & -A^T\\ A &\zero_{a \times a}
\end{bmatrix}$.
\begin{definition}\label{def:area-conv}
 	A function $\phi$ is \emph{area convex} with respect to a matrix $A$ on a convex set $K$ iff for any $t,u,v \in K$, $\phi$ satisfies $\phi\paran[\big]{\dfrac{t+u+v}{3}} \le \dfrac{1}{3}\big( \phi(t) +\phi(u)+\phi(v) \big) -\dfrac{1}{3\sqrt{3}}(v-u)^TM_A(u-t)$
 \end{definition}
 To understand the definition above, first lets look at the notion of strong convexity.
 $\phi$ is strongly convex if for any two points $t,u, \frac{1}{2}(\phi(t)+ \phi(u))$ exceeds $\phi\paran{\frac{1}{2}(t+u)}$ by an amount proportional to $\gnorm{t-u}{2}^2$.
 Definition \ref{def:area-conv} generalizes this notion in context of matrix $A$ for any three points $x, y, z$.
 $\phi$ is area-convex on set $K$ if for any three points $t, u, v \in K$, we have $\frac{1}{3}(\phi(t) + \phi(u)+ \phi(v))$ exceeds $\phi(\frac{1}{3}(t+u+v))$ by an amount proportional to the area of the triangle defined by the convex hull of $t,u,v$.
 
 Consider the case that points $t,u,v$ are collinear.
 For this case, the area term (i.e., the term involving $M_A$) in Definition \ref{def:area-conv} is 0 since matrix $M_A$ is antisymmetric.
 In this sense, area convexity is even weaker than strict convexity.
 Moreover, the notion of area is parameterized by matrix $A$.
 To see a specific example of this notion of area, consider
 $A = \begin{bmatrix} 0 &-1 \\ 1 &0 \end{bmatrix}$
 and $t, u,v \in \Rbb^2$.
 Then, for all possible permutations of $t,u,v$, the area term takes a value equal to $\pm(t_1(u_2-v_2)+ u_1(v_2-t_2) + v_1(t_2-u_2))$.
 Since the condition holds irrespective of the permutation so we must have that $ \phi\paran{\tfrac{t+u+v}{3}} \le \tfrac{1}{3}\big( \phi(t) +\phi(u)+\phi(v) \big) -\tfrac{1}{3\sqrt{3}}\abs{t_1(u_2-v_2)+ u_1(v_2-t_2) + v_1(t_2-u_2)}.$
 But note that area of triangle formed by points $t,u,v$ is equal to $\frac{1}{2}\abs{t_1(u_2-v_2)+ u_1(v_2-t_2) + v_1(t_2-u_2)}$.
 Hence the area term is just a high dimensional matrix based generalization of the area of a triangle.
 
 Coming back to the saddle point problem \eqref{eq:saddle_form},
 we need to pick a suitable area convex function $\phi$ on the set
 $\BB_{+,\infty}^n \times \Delta_p^+\times \Delta_c^+$.
 Since $\phi$ is defined on the joint space,
 it has the property of joint regularization vis a vis \eqref{eq:saddle_form}.
 However, we need an additional parameter: a suitable matrix $M_A$. 
 The choice of this matrix is related to the bilinear form of the
 \emph{primal-dual gap function} of \eqref{eq:saddle_form}.
 We delve into the technical details of this in Section \ref{sec:area-convexity},
 however, we state that the matrix is composed of $P, C$ and some additional constants. 
 The algorithm we state exactly follows Nesterov's dual extrapolation
 method described earlier.
 One notable difference is that in \cite{Nesterov07},
 they consider joint regularization by a strongly convex function
 which does not depend on the problem matrices $P,C$ but only on the constraint set $\BB_{+,\infty}^n \times \Delta_p^+\times \Delta_c^+$.
 Our area convex regularizer, on the other hand, is tailor made
 for the particular problem matrices $P,C$ as well as the constraint set.

\section{Area Convexity for Mixed Packing Covering LPs} \label{sec:area-convexity}
In this section, we present our technical results and algorithm
for the MPC problem, with the end goal of proving Theorem~\ref{thm:main}.
First, we relate an $(1+\eps)$-approximate solution to the saddle point problem to an $\eps$-approximate solution to MPC.
Next, we present some theoretical background towards the goal of choosing
and analyzing an appropriate area-convex regularizer in the context of the saddle point formulation, where the key requirement of the
area convex function is to obtain a provable and efficient convergence result.
Finally, we explicitly show an area convex function which is generated using a simple ``gadget" function.
We show that this area convex function satisfies all key requirements and hence achieves the desired
accelerated rate of convergence.
This section closely follows \cite{Sherman17}, in which the author chooses an area convex
function specific to the undirected multicommodity flow problem.
Due to space constraints, we relegate almost all proofs to Appendix~\ref{sec:proofs}
and simply include pointers to proofs in \cite{Sherman17} when it is directly applicable. 

\subsection{Saddle Point Formulation for MPC}
Consider the  saddle point formulation in \eqref{eq:saddle_form} for MPC problem.
Given a feasible primal-dual feasible solution pair $(x,y,z)$ and $(\wb{x}, \wb{y}, \wb{z})$ for \eqref{eq:saddle_form},
we denote $w = (x,u, y,z)$ and $\wb{w} = (\wb{x}, \wb{u}, \wb{y}, \wb{z})$ where $u , \wb{u} \in \Rbb$.
Then, we define a function $Q: \Rbb^{n+1+p+c} \times \Rbb^{n+1+p+c} \to \Rbb$ as 
\[
Q(w, \wb{w}) \defeq [\wb{y}^T\ \wb{z}^T]
\begin{bmatrix} P &-\one_p \\ -C &\one_c\end{bmatrix}
\begin{bmatrix}x \\ u \end{bmatrix}
-
[y^T \ z^T]
\begin{bmatrix}P &-\one_p \\- C &\one_c\end{bmatrix}
\begin{bmatrix}\wb{x} \\ \wb{u}\end{bmatrix}.
\]
Note that if $u = \wb{u} = 1$, then $$\sup_{\wb{w}\in \WW}Q(w, \wb{w}) = \sup_{\wb{x} \in \BB_{+,\infty}^n, \wb{y} \in \Delta_p^+, \wb{z} \in \Delta_c^+}L(x,\wb{y}, \wb{z})- L(\wb{x}, y, z)$$
is precisely the primal-dual gap function defined in Section~\ref{sec:notations}.
Notice that if $(x^*, y^*,z^*)$ is a saddle point of \eqref{eq:saddle_form}, then we have 
\[
L(x^*, y , z) \le L(x^*,y^*, z^*) \le L(x, y^*,z^*)
\]
for all $x \in \BB_{+,\infty}^n, y \in \Delta_p^+, z\in \Delta_c^+$.
From above equation, it is clear that $Q(w, w^*) \ge 0$ for all $w \in \WW$ where
$\WW \defeq \BB_{+,\infty}^n(1)\times\{1\} \times \Delta_p^+ \times \Delta_c^+$
and $w^* = (x^*, 1, y^*, z^*) \in \WW$.
Moreover, $Q(w^*,w^*) = 0$. This motivates the following accuracy measure of the candidate approximate solution $w$.
\begin{definition}\label{def:gap-fun}
	We say that $w \in \WW$ is an $\eps$-optimal solution of \eqref{eq:saddle_form} iff 
	\[ \sup_{\wb{w} \in \WW}Q(w, \wb{w})  \le \eps. \]
\end{definition}
\begin{remark} \label{rem:gap-fun-biliner-form}
	Recall the definition of $M_A$ for a matrix $A$ in Section~\ref{sec:prelims}.
	We can rewrite $Q(w,\wb{w}) = \wb{w}^T J w$ where $J = M_H$ and
	\[ H = \begin{bmatrix}
	P &-\one_p \\ -C &\one_c
	\end{bmatrix} \quad \Rightarrow \quad J := \begin{bmatrix}
	\zero_{n \times n} &\zero_{n \times 1} &-P^T &C^T\\
	\zero_{1\times n} &0 &\one_p^T &-\one_c^T\\
	P &-\one_p &\zero_{p \times p} &\zero_{p \times c}\\
	-C &\one_c &\zero_{c \times p} &\zero_{c \times c}\\
	\end{bmatrix}.\]
	Thus, the gap function in Definition \ref{def:gap-fun} can be written in the bilinear form $\sup_{\wb{w} \in \WW} \wb{w}^TJw$.
\end{remark} 
Lemma~\ref{lem:saddle-to-appx-mpc} relates the $\eps$-optimal solution of \eqref{eq:saddle_form} to the $\eps$-approximate solution to MPC.
\begin{lemma}\label{lem:saddle-to-appx-mpc}
	Let $(x,y,z)$ satisfy $\sup_{(\wb{x}, \wb{y}, \wb{z}) \in \BB_{+,\infty}^n \times \Delta_p^+ \times \Delta_c^+} L(x, \wb{y}, \wb{z}) - L(\wb{x}, y, z) \le \eps$. Then either \\[1mm]
	1. $x$ is an $\eps$-approximate solution of MPC, or\\[1mm]
	2. $y,z$ satisfy $y^T(P\wb{x}-\one_p) + z^T(-C\wb{x} + \one_c) > 0$ for all $\wb{x} \in \BB_{+,\infty}^n$.
\end{lemma}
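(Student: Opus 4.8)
The plan is to establish the contrapositive of the implication ``(not~1)~$\Rightarrow$~(2)'': assuming that $x$ is \emph{not} an $\eps$-approximate solution of MPC, we will deduce that $y,z$ satisfy the certificate inequality in item~2. Throughout we use the standing assumption (maintained by our algorithm) that $x \in \BB_{+,\infty}^n$, $y \in \Delta_p^+$ and $z \in \Delta_c^+$.

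First I would unfold the bilinear form as $L(x,y,z) = y^T(Px - \one_p) + z^T(\one_c - Cx)$, and observe that in the hypothesis the maximization variables $(\wb{y},\wb{z})$ and the minimization variable $\wb{x}$ range over independent sets, so the quantity being bounded separates:
\[
\sup_{(\wb{x}, \wb{y}, \wb{z})} \big( L(x, \wb{y}, \wb{z}) - L(\wb{x}, y, z) \big) \;=\; \max_{\wb{y} \in \Delta_p^+,\ \wb{z} \in \Delta_c^+} L(x, \wb{y}, \wb{z}) \;-\; \min_{\wb{x} \in \BB_{+,\infty}^n} L(\wb{x}, y, z).
\]

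Next I would evaluate the inner maximum. Since $\Delta_p^+ = \{\wb{y} \ge \zero : \gnorm{\wb{y}}{1} \le 1\}$, maximizing the linear functional $\wb{y} \mapsto \wb{y}^T(Px - \one_p)$ over $\Delta_p^+$ gives $\max\{0,\ \max_j\,((Px)_j - 1)\}$, and similarly maximizing $\wb{z} \mapsto \wb{z}^T(\one_c - Cx)$ over $\Delta_c^+$ gives $\max\{0,\ \max_k\,(1 - (Cx)_k)\}$; hence $\max_{\wb{y},\wb{z}} L(x,\wb{y},\wb{z})$ equals the sum of these two nonnegative quantities. Now if $x$ fails to be an $\eps$-approximate solution of MPC, then---since $x \in \BB_{+,\infty}^n$ already---the failure must be a violated packing or covering constraint, i.e. $(Px)_j > 1+\eps$ for some $j$ or $(Cx)_k < 1-\eps$ for some $k$. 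In either case one of the two terms above strictly exceeds $\eps$, so $\max_{\wb{y},\wb{z}} L(x,\wb{y},\wb{z}) > \eps$.

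Combining this with the separated form of the hypothesis forces $\min_{\wb{x} \in \BB_{+,\infty}^n} L(\wb{x}, y, z) > 0$; equivalently, $L(\wb{x}, y, z) = y^T(P\wb{x} - \one_p) + z^T(-C\wb{x} + \one_c) > 0$ for every $\wb{x} \in \BB_{+,\infty}^n$, which is precisely item~2. This argument is essentially a one-line LP-duality computation, so I do not anticipate a real obstacle; the only steps requiring a bit of care are the closed form for the maximum of a linear functional over the extended simplex $\Delta^+$, and invoking the standing assumption $x \in \BB_{+,\infty}^n$, without which a failure of item~1 need not be visible in the $P$- or $C$-constraints.
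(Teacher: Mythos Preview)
Your proposal is correct and follows essentially the same route as the paper's proof: both compute $\max_{\wb{y},\wb{z}} L(x,\wb{y},\wb{z})$ explicitly as the sum $\gnorm{[Px-\one_p]_+}{\infty}+\gnorm{[\one_c-Cx]_+}{\infty}$, observe that failure of item~1 forces this to exceed $\eps$, and then use the gap bound to conclude $\inf_{\wb{x}} L(\wb{x},y,z)>0$. Two minor remarks: your opening phrase ``establish the contrapositive of the implication (not~1)~$\Rightarrow$~(2)'' is a slip---you in fact prove (not~1)~$\Rightarrow$~(2) directly, which is exactly what is needed for the disjunction; and the paper additionally records the observation that if MPC is feasible then $x$ must be an $\eps$-approximate solution, but this is not required for the lemma as stated.
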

This lemma states that in order to find an $\eps$-approximate solution of MPC, it suffices to find $\eps$-optimal solution of \eqref{eq:saddle_form}.
Henceforth, we will focus on $\eps$-optimality of the saddle point formulation \eqref{eq:saddle_form}.
\subsection{Area Convexity with Saddle Point Framework}
Here we state some useful lemmas which help in determining whether a differentiable function is area convex.
We start with the following remark which follows from the definition of area convexity (Definition \ref{def:area-conv}).
\begin{remark}\label{rem:subset-prop-area-conv}
	If $\phi$ is area convex with respect to $A$ on a convex set $K$,
	and $\wb{K} \subseteq K$ is a convex set,
	then $\phi$ is area convex with respect to $A$ on $\wb{K}$.
\end{remark}
The following two lemmas from \cite{Sherman17} provide the key characterization of area convexity. 
\begin{lemma}\label{lem:2d-complex-op-psd}
	Let $A \in \Rbb^{2 \times 2}$ symmetric matrix. $A \sota \begin{bmatrix}
	0 &-1 \\ 1 &0
	\end{bmatrix} \Leftrightarrow A \succeq 0$ and $\det(A) \ge 1$.
\end{lemma}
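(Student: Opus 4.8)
The plan is to unfold the definition of $\sota$ and then reduce the $4\times 4$ positive semidefiniteness condition to statements about the $2\times 2$ matrix $A$ directly. Recall that for a symmetric $A$ and antisymmetric $B = \begin{bmatrix} 0 & -1 \\ 1 & 0\end{bmatrix}$, the statement $A \sota B$ means $\widehat{A} \defeq \begin{bmatrix} A & -B \\ B & A \end{bmatrix} \succeq 0$. Writing $A = \begin{bmatrix} a & b \\ b & c\end{bmatrix}$, the matrix $\widehat{A}$ is a concrete $4 \times 4$ symmetric matrix, so the first step is simply to write it out and note that $\widehat A \succeq 0$ forces the diagonal $2\times 2$ block $A$ to be PSD, i.e. $a \ge 0$, $c \ge 0$, $\det(A) = ac - b^2 \ge 0$.

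For the main content, I would exploit the complex-number structure that motivates the operator $\sota$: the real $4\times 4$ matrix $\widehat A$ (after the standard reordering of coordinates that interleaves the two copies of $\Rbb^2$) is the realification of the $2\times 2$ complex Hermitian matrix $A + iB$, where we view $B$ as $i$ times the identity scaled appropriately. Concretely, $A + iB = \begin{bmatrix} a & b - i \\ b + i & c \end{bmatrix}$, which is Hermitian. A real symmetric matrix of the block form $\begin{bmatrix} A & -B \\ B & A\end{bmatrix}$ is PSD if and only if the associated complex Hermitian matrix $A + iB$ is PSD (this is a standard fact: the eigenvalues of the realification are exactly those of the Hermitian matrix, each with doubled multiplicity). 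So the claim reduces to: $\begin{bmatrix} a & b-i \\ b+i & c\end{bmatrix} \succeq 0 \iff a \ge 0,\ c \ge 0,\ \det(A) \ge 1$. For a $2\times 2$ Hermitian matrix, PSD is equivalent to nonnegative trace and nonnegative determinant, i.e. $a + c \ge 0$ and $ac - |b-i|^2 = ac - b^2 - 1 = \det(A) - 1 \ge 0$. Combining $\det(A) \ge 1$ with $a, c$ real and $ac \ge 1 + b^2 > 0$ forces $a$ and $c$ to have the same sign; together with $a + c \ge 0$ this gives $a > 0$, $c > 0$, hence $A \succeq 0$. Conversely if $A \succeq 0$ and $\det(A) \ge 1$ then $a,c \ge 0$ and $ac - b^2 - 1 \ge 0$, so the Hermitian matrix is PSD. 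This closes both directions.

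The main obstacle — or rather the one step that needs care rather than cleverness — is justifying the equivalence between PSD-ness of the real block matrix $\begin{bmatrix} A & -B \\ B & A \end{bmatrix}$ and PSD-ness of the complex Hermitian matrix $A + iB$. One clean way to see it without invoking the realification folklore: given a complex vector $v = p + iq \in \Cbb^2$, one checks directly that $v^* (A + iB) v = \begin{bmatrix} p^T & q^T\end{bmatrix} \begin{bmatrix} A & -B \\ B & A\end{bmatrix} \begin{bmatrix} p \\ q \end{bmatrix}$, using $A^T = A$ and $B^T = -B$ so that the imaginary parts cancel; this identity shows the two quadratic forms have the same range of values, giving the equivalence immediately. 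Since this is the characterization lemma from \cite{Sherman17}, I would also simply cite it there and present the above as the self-contained argument for completeness. Everything else is the elementary $2\times 2$ PSD criterion (nonnegative trace and determinant), which requires no real work.
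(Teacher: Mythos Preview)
Your proof is correct, but it takes a genuinely different route from the paper. The paper argues via principal minors and the Schur complement: writing $T = \begin{bmatrix} A & -B \\ B & A\end{bmatrix}$, it first observes that $T \succeq 0$ forces $A \succeq 0$ and (via a $3\times 3$ principal minor) $\det(A) \ge 1$, hence $A$ is invertible; then the Schur complement of the top-left block gives $T \succeq 0 \Leftrightarrow A + BA^{-1}B \succeq 0$, and a direct computation shows $A + BA^{-1}B = A\bigl(1 - \tfrac{1}{\det(A)}\bigr)$, from which the equivalence follows. Your approach instead identifies $T$ as the realification of the $2\times 2$ Hermitian matrix $A + iB = \begin{bmatrix} a & b-i \\ b+i & c\end{bmatrix}$, reduces PSD-ness of $T$ to PSD-ness of $A+iB$ via the quadratic-form identity, and then reads off the conditions from $\det(A+iB) = \det(A) - 1$. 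Your route is more conceptual and explains the ``complex'' flavor of the $\sota$ operator; it also generalizes cleanly to higher dimensions (PSD-ness of $\begin{bmatrix} A & -B \\ B & A\end{bmatrix}$ is always equivalent to PSD-ness of $A+iB$). The paper's Schur-complement argument is more hands-on and yields the explicit factorization $A(1 - 1/\det(A))$, which makes the role of $\det(A)$ especially transparent without invoking the complex correspondence.
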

\begin{lemma} \label{lem:suffciency-area_con}
	Let $\phi$ be twice differentiable on the interior of convex set $K$, i.e., $\inte(K)$.
	\begin{enumerate}
		\item If $\phi$ is area convex with respect to $A$ on $\inte(K)$, then $d^2\phi(x) \sota M_A$ for all $x \in \inte(K).$
		\item If $d^2\phi(x) \sota M_A$ for all $x \in \inte(K)$, then $\phi$ is area convex with respect to $\frac{1}{3}A$ on $\inte(K)$. Moreover, if $\phi$ is continuous on $\cl(K)$, then $\phi$ is area convex with respect to $\frac{1}{3}A$ on $\cl(K)$.
	\end{enumerate}
\end{lemma}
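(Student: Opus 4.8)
The plan is to reduce both statements to a one-dimensional "restrict to a line segment" argument, exactly as in Sherman's treatment, since area convexity is a condition quantified over triples of points and the curvature condition $d^2\phi(x) \sota M_A$ is pointwise. For part~1, I would argue by contradiction: suppose $d^2\phi(x_0) \sota M_A$ fails at some interior point $x_0$. Unpacking the definition of $\sota$ via Remark~\ref{rem:gap-fun-biliner-form}'s block-matrix construction, this means the $2(a{+}b)\times 2(a{+}b)$ matrix $\begin{bmatrix} d^2\phi(x_0) & -M_A \\ M_A & d^2\phi(x_0)\end{bmatrix}$ is not PSD, so there is a direction $(h,k)$ with $h^T d^2\phi(x_0) h + k^T d^2\phi(x_0) k < 2 k^T M_A h$ (here I'm using antisymmetry of $M_A$ to collapse the cross terms). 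I would then pick the three points $t = x_0 - \delta h$, $u = x_0$, $v = x_0 + \delta h + \text{(something in the $k$ direction scaled appropriately)}$ — more carefully, one chooses the triple so that the centroid is near $x_0$ and the two edge-vectors $u-t$ and $v-u$ align with $h$ and a rotated combination of $h,k$. Taylor-expanding $\phi$ to second order around $x_0$, the left side of the area-convexity inequality behaves like $\tfrac{\delta^2}{9}\big(h^T d^2\phi\, h + \ldots\big)$ up to $o(\delta^2)$, while the $M_A$ term on the right scales like $\delta^2$ as well; for $\delta$ small the failure of the curvature inequality makes the area-convexity inequality fail, contradiction.

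For part~2, the direction is the harder and more interesting one, and here I expect the main obstacle to lie. Given $d^2\phi(x) \sota M_A$ everywhere on $\inte(K)$, I want to establish the finite inequality of Definition~\ref{def:area-conv} with $\tfrac13 A$ in place of $A$ (equivalently $\tfrac13 M_A$ in the area term). The natural approach is to define, for fixed $t,u,v \in \inte(K)$, the function $g(s) = \phi$ evaluated along a suitable one-parameter family interpolating among the centroid and the vertices, and to show $g$ satisfies a differential inequality whose integration yields the claim; alternatively, following Sherman, one writes the difference $\tfrac13(\phi(t)+\phi(u)+\phi(v)) - \phi(\tfrac{t+u+v}{3})$ as a double integral of $d^2\phi$ over the triangle (a second-order Taylor-with-integral-remainder identity for the centroid), then lower-bounds that integral using the pointwise bound $d^2\phi(x) \sota M_A$, which by Lemma~\ref{lem:2d-complex-op-psd}-type reasoning controls $d^2\phi$ from below in terms of the antisymmetric form, and finally recognizes the resulting quantity as $\tfrac{1}{3\sqrt3}(v-u)^T M_A (u-t)$ up to the factor $3$. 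The factor-of-$3$ loss is exactly the slack one picks up from bounding the integral of the quadratic form over the triangle by its value times a constant depending on the geometry of the standard simplex.

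The final sentence of part~2 — extending from $\inte(K)$ to $\cl(K)$ under continuity — is a routine limiting argument: given $t,u,v \in \cl(K)$, take interior sequences $t_j \to t$, $u_j \to u$, $v_j \to v$ (using that $\reinte$ is nonempty for a convex set and that one can pull boundary points inward), apply the already-proved inequality at each $j$, and pass to the limit using continuity of $\phi$ on $\cl(K)$ and continuity of the bilinear area term in its arguments. I would keep this part brief. Overall, since the excerpt explicitly says proofs mirroring \cite{Sherman17} may be cited, I would present the line-restriction reduction and the integral identity in part~2 in detail, note the contradiction setup in part~1, and defer the most mechanical Taylor-expansion bookkeeping to the appendix with a pointer to the analogous lemma in \cite{Sherman17}.
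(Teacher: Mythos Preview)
The paper does not prove this lemma at all: its ``proof'' is a one-line citation to Theorem~1.6 of \cite{Sherman17}. Your proposal goes considerably further, sketching the actual argument (Taylor-expansion contradiction for part~1, an integral-over-the-triangle identity for part~2, and a limiting argument for the closure extension). The sketch is broadly in line with Sherman's approach and is reasonable, though your description of the triple of points in part~1 is vague and would need to be made precise. Given that the paper explicitly permits pointers to \cite{Sherman17} for results taken verbatim from there, you could simply match the paper and cite; your more detailed write-up is not required but is a legitimate expansion.
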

In order to handle the operator $\sota$ (recall from Section \ref{sec:notations}),
we state some basic but important properties of this operator,
which will come in handy in later proofs.
\begin{remark}\label{prop:fact_on_complex_operator} 
	For symmetric matrices $A$ and $C$ and antisymmetric matrices $B$ and $D$,
	\begin{enumerate}
		\item If $A \sota B$ then $A \sota (-B)$.
		\item If $A \sota B$ and $\lambda \ge 0$ then $\lambda A \sota \lambda B$.
		\item If $A \sota B$ and $C \sota D$ then $A+ C  \sota (B+D)$.
	\end{enumerate}
\end{remark}

Having laid a basic foundation for area convexity,
we now focus on its relevance to solving the saddle point problem \eqref{eq:saddle_form}.
Considering Remark \ref{rem:gap-fun-biliner-form},
we can write the gap function criterion of optimality in terms of bilinear form of the matrix $J$.
Suppose we have a function $\phi$ which is area convex with respect to $H$ on set $\WW$. 
Then, consider the following \emph{jointly-regularized} version of the bilinear form:
\begin{equation} \label{eq:joint-reg-area-conv} 
\wt{\eta}(w) := \sup_{\wb{w} \in \WW}\ \wb{w}^TJw - \phi(\wb{w}).
\end{equation}
Similar to Nesterov's dual extrapolation, one can attain $O(1/\eps)$ convergence
of accelerated gradient descent for function $\wt{\eta}(w)$ in \eqref{eq:joint-reg-area-conv}
over variable $w$.
In order to obtain gradients of $\wt{\eta}(w)$, we need access to
$\argmax_{\wb{w} \in \WW} \wb{w}^TJw -\phi(\wb{w})$.
However, it may not be possible to find an exact maximizer in all cases.
Again, one can get around this difficulty by instead using an approximate optimization oracle
of the problem in \eqref{eq:joint-reg-area-conv}.
\begin{definition}
	A $\delta$-optimal solution oracle (OSO) for $\phi:\WW\to \Rbb$ takes input $a$ and outputs $w \in \WW$ such that 
	\[ a^Tw -\phi(w) \ge \sup_{\wb{w} \in \WW} a^T\wb{w} -\phi(\wb{w}) -\delta.\] 
\end{definition} 
Given $\Phi$ as a $\delta$-OSO for a function $\phi$,
consider the following algorithm (Algorithm \ref{alg:AC-MPC}):
\begin{algorithm}[H]
	\label{alg:AC-MPC}
	\caption{Area Convex Mixed Packing Covering (AC-MPC)}
	\begin{algorithmic}
		\STATE Initialize $w_0 = (\zero_{n}, 1, \zero_{p+c})$
		\FOR {$t = 0, \dots, T$}
		\STATE $w_{t+1} \gets w_t + \Phi(Jw_t +2J\Phi(Jw_t))$
		\ENDFOR
	\end{algorithmic}
\end{algorithm}
For Algorithm \ref{alg:AC-MPC}, \cite{Sherman17} shows the following:
\begin{lemma}\label{lem:conv_AC-MPC}
	Let $\phi: \WW \to[-\rho, 0]$. Suppose $\phi$ is area convex with respect to $2\sqrt{3}H$ on $\WW$. Then for $J = M_H$ and for all $ t\ge 1$ we have $w_t/ t \in \WW$ and, \[ \sup_{\wb{w} \in \WW} \wb{w} J\tfrac{w_t}{t} \le \delta + \tfrac{\rho}{t}.\]
 	In particular, in $\frac{\rho}{\eps}$ iterations, Algorithm \ref{alg:AC-MPC} obtain $(\delta+\eps)$-solution of the saddle point problem \eqref{eq:saddle_form}.
\end{lemma}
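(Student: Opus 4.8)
The plan is to mirror Sherman's dual extrapolation analysis, using area convexity of $\phi$ with respect to $2\sqrt{3}H$ as the replacement for strong convexity. The central object is the running average $\bar w_t := w_t/t$; I want to show (i) $\bar w_t \in \WW$ for all $t \ge 1$, and (ii) the gap bound $\sup_{\bar w \in \WW}\bar w^\top J \bar w_t \le \delta + \rho/t$. For (i), observe that each update $w_{t+1} = w_t + \Phi(\,\cdot\,)$ adds a point lying in $\WW$ (since $\Phi$ is a $\delta$-OSO for $\phi:\WW \to \Rbb$, its output is in $\WW$ by definition), so $w_t$ is a sum of $t$ points of $\WW$; convexity of $\WW$ then gives $w_t/t \in \WW$. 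The work is all in (ii).

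For (ii), I would introduce the shorthand $a_t := Jw_t$ and $v_t := \Phi(a_t)$, so that the "corrected" argument fed to the oracle is $a_t + 2Jv_t$, and $w_{t+1} = w_t + \Phi(a_t + 2Jv_t)$. Writing $w_{t+1}^+ := \Phi(a_t + 2Jv_t)$ for the increment, the goal is a one-step inequality of the form
\[
\bar w^\top J w_{t+1} \;\le\; \bar w^\top J w_t \;+\; \phi(\bar w) \;-\; \text{(something nonnegative)} \;+\; \delta
\]
valid for every $\bar w \in \WW$, which telescopes. The mechanism: the $\delta$-OSO guarantee applied at $w_{t+1}^+$ with linear part $a_t + 2Jv_t$ gives $(a_t+2Jv_t)^\top w_{t+1}^+ - \phi(w_{t+1}^+) \ge (a_t+2Jv_t)^\top \bar w - \phi(\bar w) - \delta$ for all $\bar w$; similarly the guarantee at $v_t$ with linear part $a_t$. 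Combining these two inequalities and using that $J = M_H$ is antisymmetric (so $x^\top J x = 0$ and $x^\top J y = -y^\top J x$), the cross terms involving $v_t$ rearrange into the area-convexity expression $(v-u)^\top M_H (u-t)$ evaluated at the triple $(t,u,v) = (\text{0-ish anchor}, v_t, w_{t+1}^+)$ — this is exactly where the scaling by $2\sqrt{3}$ on $H$ is calibrated so that the area term produced by the algebra is dominated by the defect term $\phi(t)+\phi(u)+\phi(v) - 3\phi(\frac{t+u+v}{3})$ guaranteed by Definition~\ref{def:area-conv}. The nonpositivity of $\phi$ (range $[-\rho,0]$) is used to bound the accumulated $\phi$-terms: after telescoping from $t=0$ to $t-1$, the residual $\phi$ contribution is at most $\rho$, while $\phi(\bar w) \le 0$ can be dropped, yielding $\bar w^\top J w_t \le \rho + t\delta$; dividing by $t$ gives the claim. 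The final sentence of the lemma is then immediate: choosing $t = \rho/\eps$ makes $\rho/t = \eps$, so $\sup_{\bar w}\bar w^\top J \bar w_t \le \delta + \eps$, which by Definition~\ref{def:gap-fun} is a $(\delta+\eps)$-optimal solution of \eqref{eq:saddle_form}.

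The main obstacle is the one-step inequality: correctly bookkeeping the antisymmetric cross terms so that the two OSO inequalities (at $v_t$ and at $w_{t+1}^+$) combine to expose precisely the area-convexity defect at the right triple, with the right constant. In particular one must check that the "$+2Jv_t$" correction in the argument — the dual extrapolation step — is exactly what cancels the stray first-order term $v_t^\top J w_{t+1}^+$ that would otherwise obstruct telescoping, and that the area term that survives has coefficient matched to $2\sqrt{3}H$ rather than $H$. Since this is the content of Sherman's analysis transported verbatim to the matrix $H$ and set $\WW$ here, I would carry out the algebra following \cite{Sherman17} and only verify that no structural property of the multicommodity-flow instance was used beyond antisymmetry of $M_H$, nonpositivity of $\phi$, and the OSO guarantee — all of which hold in our setting.
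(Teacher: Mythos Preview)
Your proposal is correct and aligns with the paper's approach: the paper does not reproduce the argument at all but simply cites Theorem~1.3 of \cite{Sherman17}, and your sketch is a faithful outline of precisely that dual-extrapolation analysis (two OSO inequalities, antisymmetry of $J$, the area-convexity defect absorbing the cross term, telescoping, and the range bound on $\phi$). There is nothing to add beyond noting that the paper defers entirely to Sherman, whereas you have unpacked the mechanism.
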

The analysis of this lemma closely follows the analysis of Nesterov's dual extrapolation.

Note that, each iteration consists of $O(1)$ matrix-vector multiplications,
$O(1)$ vector additions,
and $O(1)$ calls to the approximate oracle.
Since the former two are parallelizable to $O(\log n)$ depth,
the same remains to be shown for the oracle computation
to complete the proof of the run-time in Theorem~\ref{thm:main}.

Recall from the discussion in Section~\ref{sec:prelims} that the
critical bottleneck of Nesterov's method is that diameter of the $\ell_{\infty}$ ball is $\Omega(\sqrt{n})$,
which is achieved even in the Euclidean $\ell_2$ norm.
This makes $\rho$ in Lemma \ref{lem:conv_AC-MPC} to also be $\Omega(\sqrt{n})$,
which can be a major bottleneck for high dimensional LPs, which are commonplace among real-world applications.

Although, on the face of it, area convexity applied to the saddle point formulation \eqref{eq:saddle_form}
has a similar framework to Nesterov's dual extrapolation,
the challenge is to construct a $\phi$ for which we can overcome the above bottleneck.
Particularly, there are three key challenges to tackle:\\
	\textbf{1.} We need to show that existence of a function $\phi$ that is area convex with respect to $H$ on $\WW$.\\
	\textbf{2.} $\phi: \WW \to [-\rho, 0]$ should be such that $\rho$ is not too large.\\ 
	\textbf{3.} There should exist an efficient $\delta$-OSO for $\phi$.\\
In the next subsection, we focus on these three aspects in order to complete our analysis.

\subsection{Choosing an area convex function}
First, we consider a simple 2-D gadget function and prove a ``nice" property of this gadget.
Using this gadget, we construct a function which can be shown to be area convex
using the aforementioned property of the gadget.

Let $\gamma_\beta:\Rbb_+^2 \to \Rbb$ be a function parameterized by $\beta$ defined as 
\[\gamma_\beta(a,b) = ba\log{a} + \beta b\log{b} .\]
\begin{lemma}\label{lem:gadget_fun}
	Suppose $\beta \ge 2$. Then $d^2\gamma_\beta(a,b) \succeq \begin{bmatrix}
	0 &-1\\ 1 &0
	\end{bmatrix}$ for all $a \in (0,1]$ and $ b>0$.
\end{lemma}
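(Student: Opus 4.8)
The plan is to compute the Hessian $d^2\gamma_\beta(a,b)$ explicitly and then invoke Lemma~\ref{lem:2d-complex-op-psd}, which says that for a $2\times 2$ symmetric matrix $M$, the condition $M \sota \begin{bmatrix} 0 & -1 \\ 1 & 0\end{bmatrix}$ is equivalent to $M \succeq 0$ together with $\det(M) \ge 1$. So the statement reduces to two scalar inequalities about the entries of the Hessian.

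First I would differentiate. Writing $\gamma_\beta(a,b) = ba\log a + \beta b \log b$, the first partials are $\partial_a \gamma_\beta = b(\log a + 1)$ and $\partial_b \gamma_\beta = a\log a + \beta(\log b + 1)$. Differentiating again, $\partial_{aa}\gamma_\beta = b/a$, $\partial_{ab}\gamma_\beta = \log a + 1$, and $\partial_{bb}\gamma_\beta = \beta/b$. Hence
\[
d^2\gamma_\beta(a,b) = \begin{bmatrix} b/a & \log a + 1 \\ \log a + 1 & \beta/b \end{bmatrix}.
\]
The diagonal entries are positive for $a \in (0,1]$, $b > 0$, so positive semidefiniteness is equivalent to the determinant being nonnegative, and in fact the needed condition is the stronger $\det \ge 1$. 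So the whole lemma comes down to showing
\[
\frac{b}{a}\cdot\frac{\beta}{b} - (\log a + 1)^2 = \frac{\beta}{a} - (1 + \log a)^2 \ge 1
\]
for all $a \in (0,1]$, whenever $\beta \ge 2$.

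The main (and only real) obstacle is this last one-variable inequality. I would handle it by reducing to the worst case $\beta = 2$ (since the left side is increasing in $\beta$) and substituting $a = e^{-s}$ with $s \ge 0$, turning it into $2e^{s} - (1-s)^2 \ge 1$, i.e. $2e^s \ge 1 + (1-s)^2 = 2 - 2s + s^2$, i.e. $e^s \ge 1 - s + s^2/2$ for all $s \ge 0$. This follows immediately from the Taylor expansion $e^s = 1 + s + s^2/2 + s^3/6 + \cdots$, since for $s \ge 0$ we have $e^s \ge 1 + s + s^2/2 \ge 1 - s + s^2/2$. (One could alternatively define $g(a) = \beta/a - (1+\log a)^2$, compute $g'(a) = -\beta/a^2 - 2(1+\log a)/a = -\frac{1}{a^2}(\beta + 2a(1+\log a))$, check the sign of $\beta + 2a(1+\log a)$ on $(0,1]$, and verify the boundary value $g(1) = \beta - 1 \ge 1$; but the substitution argument is cleaner.) This establishes $\det(d^2\gamma_\beta(a,b)) \ge 1$, and combined with the positive diagonal entries and Lemma~\ref{lem:2d-complex-op-psd} — or just directly, since $M \succeq 0$ plus $\det M \ge 1$ is exactly what we need — we conclude $d^2\gamma_\beta(a,b) \succeq \begin{bmatrix} 0 & -1 \\ 1 & 0 \end{bmatrix}$, i.e. $d^2\gamma_\beta(a,b) \sota \begin{bmatrix} 0 & -1 \\ 1 & 0 \end{bmatrix}$, as claimed.
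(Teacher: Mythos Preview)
Your proof is correct and follows essentially the same approach as the paper: reduce via Lemma~\ref{lem:2d-complex-op-psd} to checking that the Hessian is PSD with determinant at least $1$, compute the Hessian (the paper orders the variables as $(b,a)$ rather than $(a,b)$, but this is immaterial), and then verify the scalar inequality $\beta/a - (1+\log a)^2 \ge 1$. The only cosmetic difference is in this last step: the paper uses exactly the derivative argument you sketch parenthetically (showing $g(a)=\beta/a-(1+\log a)^2$ is decreasing on $(0,1]$ and evaluating $g(1)=\beta-1\ge 1$), whereas your main line uses the substitution $a=e^{-s}$ and a Taylor bound; both are equally valid.
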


Now, using the function $\gamma_\beta$,
we construct a function $\phi$
and use the sufficiency criterion provided in Lemma \ref{lem:suffciency-area_con} to show that
$\phi$ is area convex with respect to $J$ on $\WW$.
Note that our set of interest $\WW$ is not full-dimensional,
whereas Lemma \eqref{lem:suffciency-area_con} is only stated for $\inte$ and not for $\reinte$.
To get around this difficulty, we consider a larger set $\wb{\WW} \supset \WW$ such that
$\wb{\WW}$ is full dimensional and $\phi$ is area convex on $\wb{\WW}$.
Then we use Remark \ref{rem:subset-prop-area-conv} to obtain the final result, i.e., area convexity of $\phi$.
\begin{theorem}\label{thm:are-con-phi}
	Let $w = (x,u,y,z)$ and define \\[1mm]
	$\phi(w) \defeq \tsum\limits_{i = 1}^{p}\sum\limits_{j = 1}^{n} P_{ij}\gamma_{p_i}(x_j, y_i) +\sum\limits_{i=1}^{p} \gamma_2(u, y_i)+\sum\limits_{i = 1}^{c}\sum\limits_{j = 1}^{n} C_{ij}\gamma_{c_i}(x_j, z_i) +\sum\limits_{i = 1}^{c} \gamma_2(u, z_i),$\\ where $p_i = 2*\frac{\gnorm{P}{\infty}}{\gnorm{P_{i,:}}{1} }$ and $c_i = 2*\frac{\gnorm{C}{\infty}}{\gnorm{C_{i,:}}{1} }$, then $\phi$ is area convex with respect to $\frac{1}{3}\begin{bmatrix}P &-\one_p \\ -C &\one_c\end{bmatrix}$ on set $ \wb{\WW} :=\BB_{+,\infty}^{n+1}(1) \times \Delta_p^+ \times \Delta_c^+$. In particular, it also implies $6\sqrt{3}\phi$ is area convex with respect to $2\sqrt{3}\begin{bmatrix}P &-\one_p \\ -C &\one_c\end{bmatrix}$ on set $\WW$.
\end{theorem}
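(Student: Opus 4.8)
The plan is to invoke the sufficiency criterion Lemma~\ref{lem:suffciency-area_con}(2): with $H=\begin{bmatrix}P&-\one_p\\-C&\one_c\end{bmatrix}$, it suffices to prove $d^2\phi(w)\sota M_H$ for every $w\in\inte(\wb{\WW})$. This is precisely why we enlarge $\WW$ to $\wb{\WW}$: the true domain $\WW$ carries the degenerate factor $\{1\}$ and hence has empty interior, whereas $\wb{\WW}=\BB_{+,\infty}^{n+1}(1)\times\Delta_p^+\times\Delta_c^+$ is full dimensional with $\inte(\wb{\WW})=(0,1)^{n+1}\times\inte(\Delta_p^+)\times\inte(\Delta_c^+)\neq\emptyset$, and on it every logarithm appearing in $\phi$ has a positive argument, so $\phi$ is $C^\infty$ there. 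Since $t\log t\to0$ as $t\to0^+$, the gadget $\gamma_\beta(a,b)=ba\log a+\beta b\log b$ extends continuously to $\Rbb_+^2$, so $\phi$ is continuous on $\cl(\wb{\WW})=\wb{\WW}$; Lemma~\ref{lem:suffciency-area_con}(2) then upgrades the interior statement to area convexity of $\phi$ with respect to $\tfrac13 H$ on all of $\wb{\WW}$, which is the first assertion.

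The heart of the argument is a matched decomposition of $d^2\phi$ and of $M_H$ into pieces supported on coordinate pairs. Order the coordinates of $w$ as $(x_1,\dots,x_n,u,y_1,\dots,y_p,z_1,\dots,z_c)$. Reading off the blocks of $M_H$ (which equals the matrix $J$ of Remark~\ref{rem:gap-fun-biliner-form}), one obtains $M_H=\sum_{i,j}E^P_{ij}+\sum_i E^{Pu}_i+\sum_{i,j}E^C_{ij}+\sum_i E^{Cu}_i$, where $E^P_{ij}$ is the antisymmetric matrix carrying $-P_{ij}$ in slot $(x_j,y_i)$ and $+P_{ij}$ in $(y_i,x_j)$; $E^{Pu}_i$ carries $+1$ in $(u,y_i)$, $-1$ in $(y_i,u)$; $E^{C}_{ij}$ carries $+C_{ij}$ in $(x_j,z_i)$, $-C_{ij}$ in $(z_i,x_j)$; and $E^{Cu}_i$ carries $-1$ in $(u,z_i)$, $+1$ in $(z_i,u)$. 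Since every summand of $\phi$ depends on only two coordinates, $d^2\phi(w)$ is the sum over the same index set of the zero‑padded $2\times2$ Hessians $P_{ij}\,d^2\gamma_{p_i}(x_j,y_i)$, $d^2\gamma_2(u,y_i)$, $C_{ij}\,d^2\gamma_{c_i}(x_j,z_i)$, $d^2\gamma_2(u,z_i)$. I will record two elementary facts: (i) zero‑padding preserves $\sota$, because the resulting block matrix $\begin{bmatrix}\wt A&-\wt B\\ \wt B&\wt A\end{bmatrix}$ is, after a permutation, block diagonal with sole nonzero block $\begin{bmatrix}A&-B\\B&A\end{bmatrix}\succeq0$; and (ii) $\sota$ is additive (Remark~\ref{prop:fact_on_complex_operator}(3)). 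Hence it is enough to verify $\sota$ between each padded Hessian and its corresponding elementary matrix.

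Each such pairwise check reduces to the gadget bound. Lemma~\ref{lem:gadget_fun}, read through Lemma~\ref{lem:2d-complex-op-psd}, gives $d^2\gamma_\beta(a,b)\sota\begin{bmatrix}0&-1\\1&0\end{bmatrix}$ whenever $\beta\ge2$, $a\in(0,1]$, $b>0$. On $\inte(\wb{\WW})$ we have $x_j,u\in(0,1)$ and $y_i,z_i>0$; moreover $p_i=2\gnorm{P}{\infty}/\gnorm{P_{i,:}}{1}\ge2$ and $c_i=2\gnorm{C}{\infty}/\gnorm{C_{i,:}}{1}\ge2$, because $\gnorm{P}{\infty}=\gnorm{P}{\infty\to\infty}=\max_i\gnorm{P_{i,:}}{1}$ and likewise for $C$ (identically zero rows contribute no terms and may be dropped). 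For the pairs $(x_j,y_i)$ and $(u,z_i)$ the orientation of the elementary matrix already matches $\begin{bmatrix}0&-1\\1&0\end{bmatrix}$, so scaling by $P_{ij}\ge0$ (resp.\ by $1$) via Remark~\ref{prop:fact_on_complex_operator}(2) delivers the claim; for the pairs $(u,y_i)$ and $(x_j,z_i)$ the elementary matrix has the opposite orientation, so I first flip the antisymmetric part using Remark~\ref{prop:fact_on_complex_operator}(1) and then scale by $1$ (resp.\ $C_{ij}\ge0$). Adding all of these with Remark~\ref{prop:fact_on_complex_operator}(3) yields $d^2\phi(w)\sota M_H$ on $\inte(\wb{\WW})$.

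For the ``in particular'' clause: $\WW\subseteq\wb{\WW}$ (if $x\in\BB_{+,\infty}^n(1)$ then $(x,1)\in\BB_{+,\infty}^{n+1}(1)$) and $\WW$ is convex, so Remark~\ref{rem:subset-prop-area-conv} transfers area convexity of $\phi$ with respect to $\tfrac13 H$ down to $\WW$; then multiplying the defining inequality of Definition~\ref{def:area-conv} by $6\sqrt3>0$ and using $M_{\lambda H}=\lambda M_H$ shows $6\sqrt3\,\phi$ is area convex with respect to $6\sqrt3\cdot\tfrac13 H=2\sqrt3 H$ on $\WW$. I expect the only real difficulty to be organizational — pinning down the sign and orientation conventions in the decomposition of $M_H$ consistently with the chosen coordinate order of $w$, and cleanly justifying that $\sota$ survives zero‑padding so the additive decomposition is legitimate; the analytic inputs (the gadget inequality, the operator‑norm bound forcing $p_i,c_i\ge2$, and continuity on the closure) are comparatively routine.
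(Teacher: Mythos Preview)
Your proposal is correct and follows essentially the same route as the paper: decompose both $d^2\phi$ and $M_H=J$ into matching two-coordinate pieces, apply the gadget bound of Lemma~\ref{lem:gadget_fun} to each (using Remark~\ref{prop:fact_on_complex_operator}(1)--(2) for the sign flips and nonnegative scalings, and part~(3) for additivity), then invoke Lemma~\ref{lem:suffciency-area_con}(2) on $\wb{\WW}$ and Remark~\ref{rem:subset-prop-area-conv} to descend to $\WW$. Your explicit treatment of zero-padding, continuity on $\cl(\wb{\WW})$, and the orientation bookkeeping is a bit more careful than the paper's write-up, but the argument is the same.
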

Theorem \ref{thm:are-con-phi} addresses the first part of the key three challenges.
Next, Lemma~\ref{lem:bounding_phi}
shows an upper bound on the range of $\phi$.
\begin{lemma}\label{lem:bounding_phi}
	Function $\phi:\WW \to [-\rho,0]$ then $\rho = O(\gnorm{P}{\infty}\log{p} + \gnorm{C}{\infty}\log{c}).$
\end{lemma}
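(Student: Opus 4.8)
The plan is to bound $\phi(w)$ term by term over the domain $\WW = \BB_{+,\infty}^n(1)\times\{1\}\times\Delta_p^+\times\Delta_c^+$, using the explicit form
$\phi(w) = \sum_{i,j} P_{ij}\gamma_{p_i}(x_j,y_i) + \sum_i \gamma_2(u,y_i) + \sum_{i,j} C_{ij}\gamma_{c_i}(x_j,z_i) + \sum_i \gamma_2(u,z_i)$
with $\gamma_\beta(a,b) = ba\log a + \beta b\log b$. First I would record the two elementary one-variable facts: for $a\in[0,1]$ we have $-1/e \le a\log a \le 0$, and for $b\in[0,1]$ we have $-1/e \le b\log b \le 0$; in particular each summand $\gamma_\beta(a,b)$ with $a\in[0,1]$, $b\in[0,1]$, $\beta>0$ satisfies $\gamma_\beta(a,b) = b\,(a\log a) + \beta\,(b\log b) \ge -b/e - \beta/e$ and $\gamma_\beta(a,b)\le 0$ (since both terms are nonpositive). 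This already shows $\phi(w)\le 0$, giving the upper endpoint of the range.

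For the lower bound I would exploit that the ``weights'' in the double sums telescope against the parameters $p_i = 2\|P\|_\infty/\|P_{i,:}\|_1$ and $c_i = 2\|C\|_\infty/\|C_{i,:}\|_1$. Consider the packing block $\sum_{i=1}^p\sum_{j=1}^n P_{ij}\gamma_{p_i}(x_j,y_i)$. Split $\gamma_{p_i}(x_j,y_i) = y_i x_j\log x_j + p_i y_i\log y_i$. For the first part, since $x_j\log x_j \ge -1/e$ and $\sum_j P_{ij} = \|P_{i,:}\|_1$, we get $\sum_{i,j}P_{ij}\,y_i x_j\log x_j \ge -\tfrac1e\sum_i y_i\|P_{i,:}\|_1 \ge -\tfrac1e\|P\|_\infty\sum_i y_i \ge -\tfrac1e\|P\|_\infty$, using $y\in\Delta_p^+$ so $\sum_i y_i\le 1$ and $\|P_{i,:}\|_1\le \|P\|_\infty$. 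For the second part, $\sum_{i,j}P_{ij}\,p_i y_i\log y_i = \sum_i \|P_{i,:}\|_1 p_i\, y_i\log y_i = 2\|P\|_\infty\sum_i y_i\log y_i$; now $\sum_i y_i\log y_i \ge -\log p$ over the simplex (this is $-$entropy, minimized at the uniform point, giving $\ge -\log p$), so this part is $\ge -2\|P\|_\infty\log p$. The block $\sum_i\gamma_2(u,y_i)$ with $u=1$ is $\sum_i(1\cdot\log 1 + 2y_i\log y_i) = 2\sum_i y_i\log y_i \ge -2\log p$. Summing, the packing contribution is $\ge -O(\|P\|_\infty\log p)$, and symmetrically the covering contribution is $\ge -O(\|C\|_\infty\log c)$. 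Adding gives $\phi(w)\ge -O(\|P\|_\infty\log p + \|C\|_\infty\log c)$, so $\rho = O(\|P\|_\infty\log p + \|C\|_\infty\log c)$ as claimed.

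The only mild subtlety — and the step I'd flag as the one to state carefully rather than the ``hard part'' — is the handling of boundary points where some coordinate is $0$: $\gamma_\beta$ extends continuously to $\Rbb_+^2$ with the convention $0\log 0 = 0$, and the bounds $a\log a\ge -1/e$, $\sum_i y_i\log y_i\ge -\log p$ hold on the closed simplex, so the estimates above are valid on all of $\WW$. Everything else is a routine application of $t\log t\ge -1/e$ and the entropy bound, together with the crucial cancellation $\|P_{i,:}\|_1 \cdot p_i = 2\|P\|_\infty$ built into the definition of $p_i$ (and likewise for $c_i$), which is exactly what keeps $\rho$ at $O(\|P\|_\infty\log p + \|C\|_\infty\log c)$ rather than picking up extra factors of $p$ or $c$.
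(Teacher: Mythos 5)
Your proof is correct and follows essentially the same route as the paper's: bound $\phi\le 0$ via $\gamma_\beta\le 0$ on $[0,1]^2$, then lower-bound the packing and covering blocks separately using $t\log t\ge -1/e$, the entropy bound $\sum_i y_i\log y_i\ge -\log p$ over $\Delta_p^+$, and the cancellation $\|P_{i,:}\|_1\,p_i = 2\|P\|_\infty$ (resp.\ for $C$). Your explicit note about extending continuously to the boundary with the $0\log 0=0$ convention is a small point the paper leaves implicit.
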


Finally, we need an efficient $\delta$-OSO. Consider the following alternating minimization algorithm.
\begin{algorithm}[H]
	\caption{$\delta$-OSO for $\phi$}
	\label{alg:delta-oso}
	\begin{algorithmic}
		\STATE Input $a \in \Rbb^{n+1}, a^1 \in \Rbb^p, a^2 \in \Rbb^c, \delta > 0$
		\STATE Initialize $(x^0,u^0) \in \BB_{+,\infty}^n \times \{1\}$ arbitrarily.
		\FOR {$k = 1, \dots, K$}
		\STATE $(y^k, z^k) \gets \argmax\limits_{y \in \Delta^+_c,\ \ z \in \Delta^+_p } \ y^Ta^1 + z^Ta^2 - \phi(x^{k-1}, u^{k-1}, y, z)$
		\STATE $(x^k, u^k) \gets \argmax\limits_{(x, u) \in \BB_{+,\infty}^n \times \{1\}} \ [x^T\ u]a - \phi(x, u, y^k, z^k)$
		\ENDFOR
	\end{algorithmic}
\end{algorithm}
\cite{Beck15} shows the following convergence result.
\begin{lemma}\label{lem:beck}
	For $\delta > 0$, Algorithm \ref{alg:delta-oso} is a $\delta$-OSO for $\phi$ which converges in $O(\log{\frac{1}{\delta}})$
	iterations.
\end{lemma}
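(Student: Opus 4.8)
The plan is to recognize Algorithm~\ref{alg:delta-oso} as exact two-block alternating maximization of the concave function
\[
g(x,u,y,z)\defeq [x^\top\ u]\,a + y^\top a^1 + z^\top a^2 - \phi(x,u,y,z)
\]
over the compact set $\BB_{+,\infty}^n\times\{1\}\times\Delta_p^+\times\Delta_c^+$ (equivalently, alternating minimization of $-g$, the setting of \cite{Beck15}); maximizing $g$ is exactly the OSO problem for $\phi$ with input $(a;a^1;a^2)$. Concavity of $g$ follows from convexity of each gadget $\gamma_\beta$ (Lemma~\ref{lem:gadget_fun}), and $g$ extends continuously to the boundary under $t\log t=0$ at $t=0$, so a maximizer $w^\star$ exists by compactness. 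With $u$ pinned to $1$ the two alternating blocks are $x$ and $(y,z)$, and both half-steps have closed forms. The $x$-update separates over coordinates into $\min_{x_j\in[0,1]}\ s_j\,x_j\log x_j - a_j x_j$ with $s_j=(P^\top y+C^\top z)_j$, solved by $x_j=\min\{1,\exp(a_j/s_j-1)\}$ when $s_j>0$. Using $p_i\gnorm{P_{i,:}}{1}=2\gnorm{P}{\infty}$ (and its $C$-analogue) from Theorem~\ref{thm:are-con-phi}, the $y$- and $z$-parts of $\phi$ carry negative-entropy terms with \emph{uniform} weights $2\gnorm{P}{\infty}+2$ and $2\gnorm{C}{\infty}+2$, so the $(y,z)$-update decouples into two entropy-regularized linear programs over the extended simplices, each solved by an exponential-weights vector followed, when its $\ell_1$ mass exceeds $1$, by the rescaling $\proj_{\Delta_p^+}$ (resp.\ $\proj_{\Delta_c^+}$) of Section~\ref{sec:notations}.

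For the \emph{linear} rate I would verify the hypotheses of the appropriate theorem of \cite{Beck15}. After discarding every coordinate $j$ whose column vanishes in both $P$ and $C$ (such $x_j$ is unconstrained and fixed once and for all), the explosion of the $\log$-gradient at the boundary forces each block optimum to be strictly positive: for $k\ge1$ the vectors $y^k,z^k$ are componentwise positive, hence $s_j>0$ at every $x$-update and then $x^k$ is componentwise positive too, so all iterates lie in the relative interior of $\WW$. Consequently the $x$-subproblem is strictly concave with a unique maximizer, the $(y,z)$-subproblem likewise, and from iteration $1$ on Algorithm~\ref{alg:delta-oso} is two-block alternating maximization with unique block optima. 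The quantitative certificate needed for a geometric rate is supplied by the $(y,z)$ block, which is \emph{uniformly} strongly concave: restricted to $(y,z)$ and on $\{y_i\le1,\ z_i\le1\}$, the Hessian of $\phi$ is diagonal with entries $(2\gnorm{P}{\infty}+2)/y_i$ and $(2\gnorm{C}{\infty}+2)/z_i$, hence $\succeq 2I$. Feeding concavity, compactness, uniqueness of block optima, and this uniform strong concavity in one block — together with the range bound $\phi:\WW\to[-\rho,0]$, $\rho=O(\gnorm{P}{\infty}\log p+\gnorm{C}{\infty}\log c)$, of Lemma~\ref{lem:bounding_phi}, which bounds the initial gap $g(w^\star)-g(w^1)$ by a $\delta$-independent quantity — into \cite{Beck15} gives a per-iteration constant-factor contraction of the optimality gap, so $K=O(\log(1/\delta))$ iterations produce a point with $g(x^K,u^K,y^K,z^K)\ge\sup_{\wb{w}\in\WW}g(\wb{w})-\delta$; that is, Algorithm~\ref{alg:delta-oso} is a $\delta$-OSO for $\phi$.

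The main obstacle is that $\phi$ is not Lipschitz-smooth near the faces of $\WW$ — the $\log$ terms have unbounded gradient as $x_j\to0$ or $y_i\to0$ — so the textbook linear-rate theorems for \emph{smooth} two-block problems do not apply off the shelf; one must instead use the form of the \cite{Beck15} analysis that needs only convexity, uniqueness of block minimizers, and uniform strong convexity of a single block (all established above) and verify its error-bound hypothesis directly for this $\phi$. What makes that plausible is that every $\log$ term lives \emph{inside} a single block — the coupling $\sum_{i,j}P_{ij}\,y_i\,x_j\log x_j$ is linear in $y$ for fixed $x$ and smooth in $x$ for fixed $y$, and similarly for the $C$-term — so the alternating scheme never differentiates across the non-smoothness. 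Carrying out this error-bound verification, rather than the closed-form block updates or the Hessian estimate, is where the real work lies.
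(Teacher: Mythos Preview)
The paper does not actually prove this lemma: the sentence preceding it is ``\cite{Beck15} shows the following convergence result,'' and no verification of Beck's hypotheses for this particular $\phi$ appears anywhere in the text or the appendix. So the paper's ``proof'' is a bare citation.

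Your proposal therefore goes well beyond the paper. The structural observations you make --- that Algorithm~\ref{alg:delta-oso} is exact two-block alternating maximization of a concave $g$, that the block updates have closed forms (matching Lemma~\ref{lem-oso-final}), that the iterates are interior for $k\ge1$, and that the $(y,z)$ block carries a uniform negative-entropy term with weight $2\gnorm{P}{\infty}+2$ (resp.\ $2\gnorm{C}{\infty}+2$) and is hence uniformly strongly concave on the simplex --- are all correct and are exactly what one would need to invoke a linear-rate theorem. The paper relies on these facts implicitly but states none of them in connection with Lemma~\ref{lem:beck}.

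You are also right to flag the genuine obstacle: the standard linear-rate results in \cite{Beck15} assume global Lipschitz smoothness of the objective, and $\phi$ is not smooth on $\partial\WW$. Interior positivity of the iterates is not by itself enough, since you have not shown the iterates stay in a \emph{fixed} compact subset of the interior, so the effective smoothness constant could in principle degrade. Your suggested remedy --- exploiting that all $\log$ nonlinearities sit inside a single block and that the cross-block coupling is bilinear, then checking an error-bound condition directly --- is the natural route, but as you yourself say, that verification is the real work and is not carried out. In short: your write-up is more careful than the paper's one-line deferral, and it correctly isolates the gap that neither you nor the paper closes.
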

We show that for our chosen $\phi$, we can compute the two argmax in each iteration of Algorithm \ref{alg:delta-oso} analytically with computation time $O(\nnz(P)+\nnz(C))$ and hence we obtain a $\delta$-OSO running in $O((\nnz(P)+\nnz(C)\log{\frac{1}{\delta}})$ total work.
Parallelizing matrix-vector multiplications, eliminaters the dependence on $\nnz(P)$ and $\nnz(C)$,
at the cost of another $\log(N)$ term.

\begin{lemma}\label{lem-oso-final}Each $\argmax$ in Algorithm \ref{alg:delta-oso} can be computed as follows:\\[1mm]
	$x^k= \min\braces[\big]{\exp\paran[\big]{\frac{a}{P^T y^{k} + C^T z^{k}} -1}, \one_n }$ for all $j \in [n]$.\\
	$y^k = \proj_{\Delta_p^+}\paran[\big]{\exp\braces[\big]{\frac{1}{2(\gnorm{P}{\infty} +1)}(a^1-Px^{k-1}\log{x^{k-1}}) }}$\\
	$z^k = \proj_{\Delta_c^+}\paran[\big]{\exp\braces[\big]{\frac{1}{2(\gnorm{C}{\infty} +1)}(a^2-Cx^{k-1}\log{x^{k-1}}) }}$\\
	In particular, we can compute $x^k, y^k, z^k$ in $O(\nnz(P)+\nnz(C))$ work and $O(\log N)$ parallel time.
\end{lemma}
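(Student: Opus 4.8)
The plan is to turn each of the two $\argmax$ steps of Algorithm~\ref{alg:delta-oso} into a separable optimization problem and then write its solution in closed form. Two structural observations do all the work. First, on $\WW$ the last block forces $u=1$, and $u^0=1$ in the algorithm, so $u^k=1$ for every $k$; in particular each $\gamma_2(u,\cdot)$ term contributes only $2\,(\cdot)\log(\cdot)$ since $u\log u=0$. Second, $\phi$ is additively separable: no summand couples a $y$-coordinate with a $z$-coordinate, holding $(x,u)$ fixed the $y$- and $z$-parts further split across the index $i$, and holding $(y,z)$ fixed the $x$-part splits across the index $j$. Together with the product form of the feasible set, this reduces both updates to one-dimensional subproblems.

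\textbf{The $(y,z)$ update.} I would first fix $(x,u)=(x^{k-1},1)$. Because $\phi$ has no $y$--$z$ coupling and the objective $y^Ta^1+z^Ta^2$ and domain $\Delta_p^+\times\Delta_c^+$ are product-structured, the maximization splits into an independent problem in $y$ and one in $z$. For $y$, substituting $\gamma_{p_i}(x_j,y_i)=y_ix_j\log x_j+p_iy_i\log y_i$ and collecting terms, the coefficient of $y_i$ is $a^1_i-(Px^{k-1}\log x^{k-1})_i$ and the coefficient of $y_i\log y_i$ is $p_i\gnorm{P_{i,:}}{1}+2=2\gnorm{P}{\infty}+2$, where $p_i\gnorm{P_{i,:}}{1}=2\gnorm{P}{\infty}$ is exactly the definition $p_i=2\gnorm{P}{\infty}/\gnorm{P_{i,:}}{1}$. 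Thus the $y$-subproblem is
\[
\max_{y\in\Delta_p^+}\ \langle a^1-Px^{k-1}\log x^{k-1},\,y\rangle\;-\;2(\gnorm{P}{\infty}+1)\tsum_i y_i\log y_i .
\]
I would write the KKT conditions with a single multiplier for $\tsum_i y_i\le 1$: stationarity gives $y_i$ equal to $\exp\!\big(\frac{(a^1-Px^{k-1}\log x^{k-1})_i}{2(\gnorm{P}{\infty}+1)}\big)$ up to a global multiplicative constant, and the two complementary-slackness cases (multiplier zero, or the constraint tight) are precisely the two branches of the Bregman projection $\proj_{\Delta_p^+}$ recalled in Section~\ref{sec:notations}: return the vector unchanged when its $\ell_1$-norm is at most $1$, otherwise $\ell_1$-normalize it. This gives the stated formula for $y^k$; the $z^k$ formula follows identically with $C,c_i,a^2$ in place of $P,p_i,a^1$.

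\textbf{The $(x,u)$ update.} Next I would fix $(y,z)=(y^k,z^k)$ and $u=1$. The only $x$-dependent part of $\phi$ is $\sum_{i,j}P_{ij}y_ix_j\log x_j+\sum_{i,j}C_{ij}z_ix_j\log x_j=\sum_j(P^Ty^k+C^Tz^k)_j\,x_j\log x_j$, so with $d_j:=(P^Ty^k+C^Tz^k)_j\ge 0$ and $a$ denoting the first $n$ coordinates of the input (the last coordinate multiplies $u=1$ and is a constant), the update is $\max_{0\le x_j\le 1}a_jx_j-d_jx_j\log x_j$ for each $j$. Each scalar function is concave with derivative $a_j-d_j(\log x_j+1)$, strictly decreasing on $(0,1]$; hence its maximizer over $[0,1]$ is the unconstrained critical point $\exp(a_j/d_j-1)$ clipped to $[0,1]$, i.e.\ $x^k_j=\min\{\exp(a_j/d_j-1),1\}$, which is the claimed vector form. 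The degenerate coordinate $d_j=0$ (objective linear in $x_j$) is handled by reading $a_j/0$ as $\pm\infty$ according to $\mathrm{sign}(a_j)$, so $x^k_j=1$ if $a_j\ge 0$ and $x^k_j=0$ otherwise, the correct maximizer; and $u^k=1$.

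\textbf{Complexity and the main obstacle.} Each closed form uses a constant number of matrix-vector products ($Px^{k-1}\log x^{k-1}$, $P^Ty^k$, $C^Tz^k$, and the covering analogues), elementwise $\exp/\log/\min$, and one $\ell_1$-normalization per projection, for $O(\nnz(P)+\nnz(C))$ sequential work; computing the matrix-vector products and $\ell_1$-sums by tree-reduction gives $O(\log N)$ depth. I expect the bookkeeping around the two constrained subproblems to be the only delicate part: one must argue that clipping the first-order solution to $[0,1]$ is optimal (invoking concavity plus the monotone derivative, not a bare stationarity condition), pin down the constant $2(\gnorm{P}{\infty}+1)$ via $p_i\gnorm{P_{i,:}}{1}=2\gnorm{P}{\infty}$, treat the degenerate rows/coordinates, and match the complementary-slackness cases for $\tsum_i y_i\le 1$ to the exact definition of $\proj_{\Delta_p^+}$ (including the inactive-constraint branch). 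The underlying analytic content is otherwise routine, since the substantive work --- area convexity of $\phi$ (Theorem~\ref{thm:are-con-phi}), the range bound (Lemma~\ref{lem:bounding_phi}), and convergence of the alternating scheme (Lemma~\ref{lem:beck}) --- is already established.
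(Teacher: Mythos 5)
Your proof is correct and takes essentially the same approach as the paper: exploit the additive separability of $\phi$ and solve the resulting coordinate-wise first-order/KKT conditions in closed form, matching the complementary-slackness cases to the clipping/projection formulas. You are in fact somewhat more careful than the paper's own proof (explicitly tracking $u^k=1$, handling the degenerate $d_j=0$ coordinate, and justifying the clip to $[0,1]$ via concavity and a monotone derivative rather than bare stationarity), but the decomposition and the key computations are the same.
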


\section*{Acknowledgements}
We thank Richard Peng for many important pointers and discussions.

\bibliographystyle{acm}
\bibliography{DSP_references}
\newpage
\begin{appendix}

\section{Proof of auxiliary results}\label{sec:proofs}

In this section, we include proofs of lemmas from
the main paper.
In some cases, the lemmas are direct restatements
of results from other papers, for which we provide appropriate pointers.

\begin{proof}[Proof of Lemma \ref{lem:inf-lb}]
Consider an arbitrary strongly convex function $d$.
Assume WLOG that $d(0)=0$.
(otherwise, we can shift it accordingly).
We will show that $\max_{x\in \BB_{\infty}^n(r)}d(x) \ge \frac{nr^2}{2}$ by induction on $n$ for set $\BB_{\infty}^n(r)$.
This suffices because $\BB_{+,\infty}^n(1) $ is isomorphic to $\BB_\infty^n(\frac{1}{2})$. 
The claim holds for $n = 1$ by the definition of strong convexity.
Now, suppose it is true for $n-1$.
Then there exists $\wb{x} \in \BB_\infty^{n-1}(r)$ such that $d(\wb{x}) \ge \frac{(n-1)r^2}{2}$.
Moving $r$ units in the last coordinate from $\wb{x}$ in the direction of nonnegative slope, suppose we reach $\wh{x} \in \BB_{\infty}^n(r)$.
Then, due to strong convexity of $d$, we have $d(\wh{x}) \ge d(\wb{x}) + \frac{1}{2}\gnorm{\wh{x}-\wb{x}}{\infty}^2 \ge \frac{(n-1)r^2}{2} + \frac{r^2}{2} = \frac{nr^2}{2}.$
\end{proof}

\begin{proof}[Proof of Lemma \ref{lem:saddle-to-appx-mpc}]
	Suppose we are given $(x,y,z)$ such that $\sup_{(\wb{x}, \wb{y}, \wb{z}) \in \BB_{+,\infty}^n \times \Delta_p^+ \times \Delta_c^+} L(x, \wb{y}, \wb{z}) - L(\wb{x}, y, z) \le \eps$.
	If there exists $\wt{x}$ which is feasible for MPC then choosing $\wb{x} = \wt{x}$ then $L(\wt{x}, y, z) \le 0$. Hence we have 
	\begin{align*}
	\sup_{(\wb{y}, \wb{z}) \in \Delta_p^+ \times \Delta_c^+}L(x, \wb{y}, \wb{z}) &\le \eps\\
	\Rightarrow \gnorm{\bracket{Px - \one_p}_+}{\infty} + \gnorm{\bracket{-Cx + \one_c}_+}{\infty} &\le \eps,
	\end{align*} where implication follows by optimality over extended simplices $\Delta_p^+, \Delta_c^+$. So we obtain, if there exist a feasible solution for MPC then $x$ is $\eps$-approximate solution of MPC.\\
	On the other hand, suppose $x$ is not an $\eps$-approximate solution. Then 
	\begin{align*}
	\max\braces{&\gnorm{\bracket{Px - \one_p}_+}{\infty}, \gnorm{\bracket{-Cx + \one_c}_+}{\infty}} > \eps\\
	\Rightarrow \sup_{(\wb{y}, \wb{z}) \in \Delta_p^+ \times \Delta_c^+}L(x, \wb{y}, \wb{z}) = &\gnorm{\bracket{Px - \one_p}_+}{\infty} + \gnorm{\bracket{-Cx + \one_c}_+}{\infty} > \eps
	\end{align*}
	Let $(\wh{y}, \wh{z}) \in \Delta_p^+ \times \Delta_c^+$ such that $L(x, \wh{y}, \wh{z}) > \eps$ then we have 
	\begin{align*}
	\sup_{\wb{x} \in \BB_{+,\infty}^n} L(x, \wh{y}, \wh{z}) - L(\wb{x}, y,z)  &\le \eps \\
	\Rightarrow L(x, \wh{y}, \wh{z}) - \inf_{\wb{x} \in \BB_{+,\infty}^n}  L(\wb{x},y,z) &\le \eps\\
	\Rightarrow \inf_{\wb{x} \in \BB_{+,\infty}^n}  L(\wb{x},y,z)  > 0
	\end{align*}
	Hence, if $x$ is not $\eps$-approximate solution of MPC then $(y,z)$ satisfy $y^T(P\wb{x} -\one_p) + z^T(-C\wb{x}+\one_c) > 0$ for all $\wb{x} \in \BB_{+,\infty}^n(1)$ implying that MPC is infeasible.
\end{proof}

\begin{proof}[Proof of Lemma \ref{lem:2d-complex-op-psd}]
Let $B = \begin{bmatrix}
0 &-1\\ 1 & 0
\end{bmatrix}$
and $T := \begin{bmatrix}
	A &-B \\ B &A
\end{bmatrix}$.\\
Then $A \sota B$  iff $T \succeq 0$ iff all principle minors of $T$ are nonnegative. Now, $T \succeq 0$ implies $A \succeq 0$. It is easy to verify that third principle minor is nonnegative iff $\det(A) \ge 1$. So $T \succeq 0$ implies $A$ must be invertible. Then, applying Schur complement lemma, we obtain that $T \succeq 0 \Leftrightarrow A + BA^{-1}B \succeq 0$. Now let $A = \begin{bmatrix}
a &b \\ b &d
\end{bmatrix}$ then $A^{-1} = \frac{1}{ad-b^2}\begin{bmatrix}
d &-b \\ -b& a
\end{bmatrix}$. It is easy to verify that $A + BA^{-1}B = A(1-\frac{1}{\det(A)})$. This implies $T \succeq 0 \Leftrightarrow A \succeq 0$ and $\det(A) \ge 1$. Hence we conclude the proof.
\end{proof}

\begin{proof}[Proof of Lemma \ref{lem:suffciency-area_con}]
This lemma appears exactly as Theorem 1.6 in \cite{Sherman17}.
The proof follows from the same.
\end{proof}

\begin{proof}[Proof of Proposition \ref{prop:fact_on_complex_operator}]
	\begin{enumerate}
		\item \begin{align*}
		A \sota B &\Leftrightarrow \begin{bmatrix}A &-B \\ B &A\end{bmatrix} \succeq 0 \\
		&\Leftrightarrow x^TAx + y^TAy + y^TBx -x^TBy  \ge 0, \quad \forall\ x, y\\
		&\Leftrightarrow x^TAx + y^TAy  - y^TBx + x^TBy \ge 0,\quad \forall\ x, y\\
		&\Leftrightarrow \begin{bmatrix}
		A &B \\ -B &A
		\end{bmatrix} \succeq 0 \Leftrightarrow A \sota (-B)
		\end{align*}
		Here, the third equivalence follows after replacing $y$ by $-y$. Hence we conclude the proof of part 1.
		\item \begin{align*}
		A \sota B \Leftrightarrow \begin{bmatrix}A &-B \\ B &A\end{bmatrix} \succeq 0 \Rightarrow \begin{bmatrix}\lambda A &-\lambda B \\ \lambda B & \lambda A\end{bmatrix} \succeq 0 \Leftrightarrow \lambda A \succeq \lambda B
		\end{align*}
		\item $A \sota B$ implies $\begin{bmatrix}
		A &-B \\ B &A
		\end{bmatrix} \succeq 0$. Similarly $C \sota D$ implies $\begin{bmatrix}
		C &-D \\ D &C
		\end{bmatrix} \succeq 0$. Hence \[\begin{bmatrix}
		A+C & -(B+D) \\ (B+D) &(A+C)
		\end{bmatrix} \succeq 0.\] So we obtain $A+C \sota (B+D)$.
	\end{enumerate}
\end{proof}

\begin{proof}[Proof of Lemma \ref{lem:conv_AC-MPC}]
This lemma appears as Theorem 1.3 in \cite{Sherman17}, and
the proof follows from the same.
\end{proof}

\begin{proof}[Proof of Lemma \ref{lem:gadget_fun}]
We use equivalent characterization proved in Lemma \ref{lem:2d-complex-op-psd}. We need to show that $d^2\gamma_\beta(a,b) \succeq 0$ and $\det(d^2\gamma_\beta(a,b)) \ge 1$ for all $a \in (0,1]$ and $b > 0$. First of all, note that $d^2\gamma_\beta$ is well-defined on this domain. In particular, we can write
\[ d^2\gamma_\beta(a,b) = \begin{bmatrix}
\frac{\beta}{b} &1+\log{a}\\
1+\log{a} &\frac{b}{a}
\end{bmatrix}.\]
Note that a $2\times 2$ matrix is PSD if and only if its diagonal entries and determinant are nonnegative. Clearly diagonal entries of $d^2\gamma_\beta(a,b)$ are nonnegative for the given values of $\beta,a $ and $b$. Hence, in order to prove the lemma, it suffices to show that $\det(d^2\gamma_\beta(a,b)) \ge 1$.\\
$\det(d^2\gamma_\beta(a,b)) = \frac{\beta}{a} - (1+\log{a})^2$ is only a function of $a$ for any fixed value of $\beta \ge 2$. Moreover, it can be shown that $\det(d^2\gamma_\beta)$ is a decreasing function of $a$ on set $(0,1]$. Clearly, the minimum occurs at $a =1$. However, $\det(d^2\gamma_\beta(1, b)) = \beta -1 \ge 1$ for all $b > 0$. Hence we have that $\det(d^2\gamma_\beta(a,b)) \ge 1$ for all $a \in(0,1], b > 0$ and $\beta \ge 2$.\\
Finally to see the claim that $\det(d^2\gamma_\beta)$ is a decreasing function of $a \in (0,1]$ for any $\beta \ge 2$, consider 
\begin{align*}
\frac{d}{da}\paran[\big]{\det(d^2\gamma_\beta(a,b))} &= -\frac{\beta}{a^2} - \frac{2(1+ \log{a})}{a} \\
&\le -\frac{2(1+a(1+\log{a}))}{a^2} < 0
\end{align*}where the last inequality follows from the observation that $1 +a +a\log{a} > 0$ for all $a\in (0,1]$. Hence we conclude the proof.
\end{proof}

\begin{proof}[Proof of Theorem \ref{thm:are-con-phi}]
Note that $\gamma_{c_i}, \gamma_{p_i}$ are twice differentiable in the $\inte(\wb{\WW}).$ So by Lemma \ref{lem:suffciency-area_con} part 2, it is sufficient to prove that $d^2\phi(w) \sota J$ for all $w \in \inte(\wb{\WW})$.\\
By definition, we have $\gamma_{c_i} \ge 2$ for all $i \in [c]$ and $\gamma_{p_i} \ge 2$ for all $i \in [p]$. Moreover $x_j \in (0,1)$ and $y_i > 0, z_i > 0$ for any $w = (x,u,y,z) \in \inte(\WW)$. Then by Lemma \ref{lem:gadget_fun} and Proposition  \ref{prop:fact_on_complex_operator}, we have
\begin{align}
d^2\phi(w) &= \sum\limits_{i = 1}^p \sum\limits_{j = 1}^n P_{ij} d^2 \gamma_{p_i}(x_j, y_i) + \sum\limits_{i=1}^{p} d^2\gamma_2(u, y_i)+ \sum\limits_{i = 1}^c \sum\limits_{j = 1}^n C_{ij} d^2 \gamma_{c_i}(x_j, y_i) + \sum\limits_{i=1}^{c} d^2\gamma_2(u, z_i) \nonumber\\
&\sota \Big(\sum\limits_{i =1}^p \sum\limits_{j = 1}^n -P_{ij} e_j  \otimes e_{n+1+i} + \sum\limits_{i=1}^{p} e_{n+1} \otimes e_{n+1+i}\nonumber \\
&\qquad \qquad + \sum\limits_{i = 1}^c \sum\limits_{j = 1}^n C_{ij} e_j \otimes e_{n+p+i} + \sum\limits_{i=1}^{c} (-1)e_{n+1} \otimes e_{n+1+p+i} \Big)  , \label{eq:int_rel1}
\end{align}
where $e_k\otimes e_l = e_k e_l^T - e_le_k^T$. Here we used $P_{ij}d^2\gamma_{p_i}(x_j, y_i) \sota -P_{ij}e_j\otimes e_{n+1+i}$ using Lemma \ref{lem:gadget_fun}, Proposition \ref{prop:fact_on_complex_operator} part 1, part 2 and $C_{ij} d^2\gamma_{c_i}(x_j, y_i) \sota C_{ij} e_j \otimes e_{n+1+p+i}$ using Lemma \ref{lem:gadget_fun}, Proposition \ref{prop:fact_on_complex_operator} part 2. Similar arguments can be made about terms inside the other two summations. Finally we used Proposition \ref{prop:fact_on_complex_operator} part 3 to obtain \eqref{eq:int_rel1}. Note matrix in the last sum term is in fact $J$.\\
It is clear that since $d^2 \phi \sota  J$ hence using Proposition \ref{prop:fact_on_complex_operator} part 2, we have $d^2 6\sqrt{3}\phi \sota 6\sqrt{3} J$. Then by Lemma \ref{lem:suffciency-area_con} part 2, we obtain $6\sqrt{3}\phi$ is area convex with respect to $2\sqrt{3} \begin{bmatrix}P &-\one_p \\ -C &\one_c\end{bmatrix}$ on set $\wb{\WW}$.\\
Note that the set of interest $\WW \subset \wb{\WW}$. Moreover, $\WW$ is a convex subset. By Remark \ref{rem:subset-prop-area-conv}, one can see that $6\sqrt{3}\phi$ is area convex with respect to $2\sqrt{3}\begin{bmatrix}P &-\one_p \\ -C &\one_c\end{bmatrix}$ on set $\WW$. Hence we conclude the proof.
\end{proof}

\begin{proof}[Proof of Lemma \ref{lem:bounding_phi}]
Note that $\gamma_\beta(a,b) \le 0$ for any $a \in [0,1], b \in [0,1], \beta \ge 0$. Since $P_{ij} \ge 0, C_{kj} \ge 0$ for all possible values of $i,j,k$ hence we clearly have $\phi(w) \le 0$ for all $w \in \WW$. Now we prove that lower bound is not too small.\\
We have
\begin{align*}
\sum\limits_{i =1}^p\sum\limits_{j = 1}^n P_{ij} \gamma_{p_i}(x_j,y_i) &= \sum\limits_{i =1}^p\sum\limits_{j = 1}^n P_{ij} \paran*{y_i x_j\log{x_j} + p_i y_i \log{y_i}} \\
& \ge -\sum\limits_{i =1}^p\sum\limits_{j = 1}^n P_{ij} y_i \frac{1}{e} + \sum\limits_{i=1}^pp_iy_i\log{y_i} \sum\limits_{j = 1}^pP_{ij}\\
&= -\sum\limits_{i =1}^p\sum\limits_{j = 1}^n P_{ij} y_i \frac{1}{e} + \sum\limits_{i=1}^p2\gnorm{P}{\infty}y_i\log{y_i}\\
&\ge -\sum\limits_{i=1}^p \frac{\gnorm{P}{\infty}}{e} y_i + \sum\limits_{i=1}^p2\gnorm{P}{\infty}y_i\log{y_i}\\
&\ge -\frac{\gnorm{P}{\infty}}{e} - 2\gnorm{P}{\infty}\log{p}
\end{align*}
Note that if $w \in \WW$ implies $u  = 1$. So 
\begin{align*}
\sum\limits_{i =1}^p \gamma_2(u,y_i) = \sum\limits_{i = 1}^p 2y_i\log(y_i) \ge -2 \log{p}
\end{align*}
Similarly, we have
\begin{align*}
\sum\limits_{i =1}^c\sum\limits_{j = 1}^n C_{ij} \gamma_{c_i}(x_j,z_i) &\ge -\frac{\gnorm{C}{\infty}}{e} - 2\gnorm{C}{\infty}\log{c}\\
\sum\limits_{i =1}^c \gamma_2(u,z_i) &\ge -2 \log{c}
\end{align*}
Taking sum of all four terms, we conclude the proof.
\end{proof}

\begin{proof}[Proof of Lemma \ref{lem-oso-final}]
Note that maximization with respect to $u$ is trivial since $u = 1$ is a fixed variable. We first look at maximization with respect to $x \in \BB_{+,\infty}^n(1)$. Writing the first order necessary condition of Lagrange multipliers, we have
\begin{align*}
a_j - \sum\limits_{i=1}^p P_{ij} \at{\dox{}{t}\gamma_{p_i}(t,v)}{(t,v) = (x_j, y_i)}{}- \sum\limits_{i=1}^c C_{ij} \at{\dox{}{t}\gamma_{c_i}(t,v)}{(t,v) = (x_j, z_i)}{} - \lambda_j &= 0 \\
\Rightarrow a_j - \braces[\big]{\sum\limits_{i=1}^p P_{ij} y_i + \sum\limits_{i=1}^c C_{ij} z_i }(1+\log{x_j}) - \lambda_j &= 0.
\end{align*}
Here $\lambda_j$ is the Lagrange multiplier corresponding to the case that $x_j = 1$. By complimentary slackness, we have $\lambda_j > 0$ iff $x_j = 1$.\\
This implies $x_j = \min\braces*{\exp\paran*{\frac{a_j}{\sum\limits_{i=1}^p P_{ij} y_i + \sum\limits_{i=1}^c C_{ij} z_i} -1}, 1 }$ for all $j \in [n]$.\\
Now we consider maximization with respect to $y,z$. Note that there are no cross-terms of $y_i$ and $z_i$, i.e., $\dox{\gamma_{p_i}}{y_i}$ is independent of $z$ variable and vice-versa. So we can optimize them separately. From first order necessary condition of Lagrange multipliers for $y$, we have
\begin{align*}
a^1_i -\sum\limits_{j = 1}^n P_{ij}\at{\dox{}{v}\gamma_{p_i}(t,v)}{(t,v) = (x_j, y_i)}{} -  \at{\dox{}{v}\gamma_2(t,v)}{(t,v) = (u, y_i)}{} - \lambda &= 0\\
\Rightarrow a^1_i - \sum\limits_{j =1}^n P_{ij} (x_j\log{x_j} +p_i(1+\log{y_i})) - \at{u\log{u}}{u =1}{} - 2(1+\log{y_i}) -\lambda &= 0\\
\Rightarrow a^1_i - \sum\limits_{j =1}^n P_{ij} x_j\log{x_j}  - 2(\gnorm{P}{\infty}+1)(1+\log{y_i}) -\lambda &= 0
\end{align*}
where last relation follows due to definition of $p_i$ and $\lambda$ is Lagrange multiplier corresponding to the constraint $\sum\limits_{i=1}^p y_i \le 1$. By complimentary slackness, we have $\lambda > 0$ iff $\sum\limits_{i=1}^p y_i = 1$.\\
Eliminating $\lambda$ from above equations, we obtain $y = \proj_{\Delta_p^+}\paran*{\exp\braces*{\frac{1}{2(\gnorm{P}{\infty} +1)}(a^1-Px\log{x}) }}$.\\
Similarly, we obtain $z = \proj_{\Delta_c^+}\paran*{\exp\braces*{\frac{1}{2(\gnorm{C}{\infty} +1)}(a^2-Cx\log{x}) }}$.\\
It is clear from the analytical expressions that for each iteration of Algorithm \ref{alg:delta-oso}, we need $O(\nnz(P)+\nnz(C))$ time. Hence total runtime of Algorithm \ref{alg:delta-oso} is $O((\nnz(P)+\nnz(C))\log{\frac{1}{\delta}})$.
\end{proof}

\section{Proof of width reduction for the MPC problem} \label{sec:pb-normalization}

In Section \ref{sec:prelims}, we made the assumption
that all entries

This assumption follows from the results in \cite{WangRM16}.
We outline this proof in this section for completeness.

For the purpose of this proof, we introduce notation $[k] := \{1,\dots, k\}$.\\
Suppose we are given an instance of mixed packing covering of the form 
\begin{equation}\label{eq:orig_mpc}
Px \le \one_p, Cx \ge \one_c, x\ge \zero_n. 
\end{equation}
\textbf{Case 1:} For each column $P_{:, i}$ associated with variable $x_i$, let $P_{j_i,i} \defeq \max_{j \in [p]}P_{ji} > 0$.
Then we consider the following updates to MPC in order to reduce diameter.

Suppose, without loss of generality, $C_{1,i} = \max_{j \in [c]} C_{ji}$ and $C_{ci} = \min_{j \in [c]}C_{ji}$. If $C_{1i} \le P_{j_i, i}$ then we can update $\wb{P}_{:,i} = \frac{1}{P_{j_i,i}}P_{:, i}$, $\wb{C}_{:,i} = \frac{1}{P_{j_i,i}}C_{:, i}$ and $\wb{x}_i = P_{j_i,i} x_i$.
Then we observe that each element in $\wb{P}_{:,i}, \wb{C}_{:,i} $ is at most 1.
Moreover, due to the packing constraint $\wb{P}_{j_i,:} \wb{x} \le 1$,
we note that for any feasible $\wb{x}$, $\wb{P}_{j_i, i}\wb{x}_i \le 1$. Finally, since $\wb{P}_{j_i,i} = 1$, we have that $\wb{x}_i \le 1$ lies in the support of constraint set. So we replaced the $i$-th column and corresponding $i$-th variable of the system by an equivalent system.\\
Similarly, if $C_{c,i} \ge P_{j_i,i}$ then consider $x^{sol}$ defined as
\[x^{sol}_k:= \begin{cases}
\frac{1}{P_{j_i,i}} &\text{if } k = i\\ 0 &\text{otherwise}.
\end{cases}
\]
Then $x^{sol}$ is already a feasible solution of MPC.
So we may assume that $C_{ci} < P_{j_i,i} < C_{1i}$.
In this case, define $r_i = \frac{C_{1i}}{P_{j_i,i}}$ and $n_i = \ceil{\log{r_i}}$.
We make $n_i$ copies of the column $C{:,i}$ and denote by the tuple $(i,l)$
the columns of a new matrix $\wh{C}_{:,(i,l)}$ where $l \in [n_i]$.
Similarly, we add $n_i$ copies of variable $x_i$, denoted as $\wh{x}_{(i,l)}$.
We make similar changes to $P_{:,i}$.
Note that this system is equivalent to earlier system in the sense that any solution $\wh{x}_{(i,l)}, l \in [n_i]$ can be converted into a solution of the earlier system
since $x_i = \sum_{l \in [n_i]} \wh{x}_{(i,l)}$.
However, this allows us to reduce the elements of $\wh{C}$ along with certain box constraints on $\wh{x}_i$, which was our original goal.
For each $j \in [c], l \in[n_i]$, redefine \[\wh{C}_{j,(i,l)} = \min\{C_{ji}, 2^lP_{j_i,i} \} \] 
and for variable $\wh{x}_{(i,l)}$,
add the constraint
\begin{equation}\label{eq:upper-bds-xhat}
\wh{x}_{i,l} \le \frac{2}{2^l P_{j_i,i}}.
\end{equation} 
\begin{claim}
	MPC \eqref{eq:orig_mpc} and the new system defined by matrices $\wh{C}, \wh{P}$ and variable $\wh{x}$ are equivalent.
\end{claim}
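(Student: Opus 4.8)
The plan is to establish the equivalence by exhibiting explicit feasibility-preserving maps in both directions. It suffices to analyse a single split column $i$: the maps for different columns compose, and the columns that are only rescaled (or left unchanged) were already shown to yield an equivalent system in the preceding paragraphs. Write $q := P_{j_i,i}$, so the split replaces $x_i$ by $\wh{x}_{(i,1)},\dots,\wh{x}_{(i,n_i)}$ with $\wh{P}_{:,(i,l)} = P_{:,i}$, $\wh{C}_{j,(i,l)} = \min\{C_{ji},\,2^l q\}$, and box constraint $\wh{x}_{(i,l)} \le 2/(2^l q)$, and we aggregate via $x_i = \sum_{l\in[n_i]} \wh{x}_{(i,l)}$. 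For the easy direction (new system $\Rightarrow$ \eqref{eq:orig_mpc}): given feasible $\wh{x}$, put $x_i := \sum_l \wh{x}_{(i,l)} \ge 0$; since the split column's copies of $P$ are identical, every packing row of \eqref{eq:orig_mpc} takes exactly the same value as in the new system, and since $\wh{C}_{j,(i,l)} \le C_{ji}$, every covering row of \eqref{eq:orig_mpc} has value at least its value in the new system, hence $\ge 1$. So $x$ is feasible for \eqref{eq:orig_mpc}.

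The substantive direction is the converse. Given a feasible $x$ for \eqref{eq:orig_mpc}, the packing row $j_i$ gives $q\,x_i \le (Px)_{j_i} \le 1$, i.e.\ $x_i \le 1/q$; consequently $\bar{l} := \max\{\, l\in[n_i] : x_i \le 2/(2^l q)\,\}$ is well-defined, since $l=1$ always belongs to this set. Set $\wh{x}_{(i,\bar{l})} := x_i$ and $\wh{x}_{(i,l)} := 0$ for $l\neq \bar{l}$, keeping all other variables unchanged. The box constraint for copy $\bar{l}$ holds by the definition of $\bar{l}$, the aggregation identity $\sum_l \wh{x}_{(i,l)} = x_i$ is immediate, and therefore, exactly as before, every packing row is preserved and stays $\le 1$.

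It remains to verify the covering rows. Fix a covering row $j$. For each split column $k$ (with its own $q_k,\bar{l}_k,n_k,r_k$), one of two things happens. Either the clip is inactive, $\min\{C_{jk},2^{\bar{l}_k} q_k\} = C_{jk}$, and then the contribution of column $k$ to row $j$ equals $C_{jk}x_k$, the same as in \eqref{eq:orig_mpc}; or the clip is active, $2^{\bar{l}_k} q_k < C_{jk}$, in which case $\bar{l}_k < n_k$, because $n_k = \lceil\log r_k\rceil$ gives $2^{n_k} q_k \ge r_k q_k = C_{1k} \ge C_{jk}$, so by maximality of $\bar{l}_k$ the index $\bar{l}_k+1$ violates the defining inequality, whence $x_k > 2/(2^{\bar{l}_k+1} q_k) = 1/(2^{\bar{l}_k} q_k)$, and the single surviving term $2^{\bar{l}_k} q_k\, x_k$ already exceeds $1$. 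Therefore, if some split column has an active clip in row $j$, the covering constraint of the new system holds by that term alone; otherwise every split column is in the inactive case and the row's left-hand side equals $(Cx)_j \ge 1$. Either way row $j$ is satisfied, so $\wh{x}$ is feasible, which completes the equivalence.

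I expect the main obstacle to be this forward construction, specifically the choice of $\bar{l}$: the copy index must be small enough that the box bound $2/(2^{\bar{l}} q)$ has room for $x_i$, yet simultaneously large enough that whenever the coefficient clip $\min\{C_{ji},2^{\bar{l}} q\}$ discards mass, the coefficient it keeps, times $x_i$, is still at least $1$. The two parameter choices baked into the reduction — taking $n_i = \lceil\log r_i\rceil$ so the top copy never clips, and scaling the box bound as $2/(2^l q)$ so it doubles in lockstep with the clip threshold $2^l q$ — are exactly what make these two demands simultaneously satisfiable, while the packing bound $x_i \le 1/q$ is what guarantees that a legal copy index exists at all.
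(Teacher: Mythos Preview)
Your proof is correct and follows essentially the same approach as the paper: both directions use the same maps (aggregation $x_i=\sum_l\wh{x}_{(i,l)}$ for one direction; placing all of $x_i$ on the single copy $\bar{l}=\max\{l:x_i\le 2/(2^lq)\}$ for the other), and the covering verification in the hard direction proceeds by the same dichotomy on whether the clip is active, using $\bar{l}<n_i$ and maximality of $\bar{l}$ to force the lone clipped term to exceed~$1$. Your write-up is in fact somewhat more explicit than the paper's (you spell out why $\bar{l}$ is well-defined and why $\bar{l}<n_i$), but the argument is the same.
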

\begin{proof}
	For this proof, let us focus on $i$-th column and $i$-th variable.
	
	For any feasible solution $\wh{x}$, consider $x_i = \sum_{l \in [n_i]}\wh{x}_{i,l}$.
	This $x_i$ does not violate any covering constraint since $\wh{C}_{j, (i,l)}  \le C_{ji}$. 
	The packing constraints also follow because we have not made any changes to the elements
	corresponding to the packing constraints $\wh{P}_{j, (i,l)}$.
	
	For the other direction, the key fact to note
	is that any feasible $x$ satisfies $x_i \le \frac{1}{P_{j_i,i}}$
	due to packing constraint $P_{j_i,:}x \le 1$.
	Let $l_i$ be the largest index such that 
	\[x_i \le \frac{2}{2^{l_i}P_{j_i,i}},\]
	and then let 
	\[\wh{x}_{(i,l)} = \begin{cases}
	x_i &\text{if } l = l_i \\ 0 &\text{otherwise}.
	\end{cases} \]
	By construction, $\wh{x}_{(i,l)}$ satisfies the constraint in \eqref{eq:upper-bds-xhat}
	for all $l \in[n_i]$.
	Moreover, for constraint $j$, we must have $\wh{C}_{j,:}\wh{x} \ge 1$. 
	Note that if $\wh{C}_{j,(i, l_i)} = C_{ji}$ then there is nothing to prove.
	So we assume that $C_{ji} > \wh{C}_{j,(i, l_i)} = 2^{l_i} P_{j_i, i}$.
	Then we must have that $l_i < n_i$ in this case, by definition of $n_i$.
	This then gives $\wh{x}_{(i,l_i)} = x_i \ge \frac{1}{2^{l_i} P_{j_i,i}}$
	by our choice of $l_i$ being the largest possible.
	Then we know that $\wh{C}_{j,(i,l_i)} = 2^{l_i} P_{j_i, i}$, and hence the
	$j$-th covering constraint is satisfied.
	
	Packing constraints are satisfied trivially since there is no change in elements of $\wh{P}_{:,(i,l)}$ for all $l \in[n_i]$. Hence the claim follows.
\end{proof}

Finally the proof follows by change of variables as $\wb{x}_{(i,l)} = 2^{l-1} P_{j_i,i}$ and $\wb{C}_{:,(i,l)} = \frac{1}{2^{l-1}P_{j_i,i}} \wh{C}_{:,(i,l)}$.
Further, note that all elements of $\wb{P}_{:, (i,l)}$ are at most $1$ for all $l \in [n_i]$,
and all elements of $\wb{C}_{:,(i,l)}$ are at most 2 for all $l \in [n_i]$ and $\wb{x}_{i,l} \le 1$ for all $l \in [n_i]$.

\textbf{Case 2: } Suppose $P_{j_i, i  } = 0.$. This implies that in variable $x_i$, this is a purely covering problem.
So we can increase $x_i$ to satisfy the $j$th covering constraint such that
$C_{ji} > 0$ independent of the packing constraints and problem reduces to smaller packing covering problem in remaining variables and covering constraints $j$ such that $C_{ji} = 0$. For this smaller packing covering problem, we can apply the method in Case 1 again. 

\section{Application to the Densest Subgraph problem} \label{sec:experiments}

In this section, we apply the result in Theorem~\ref{thm:main}
to the \emph{densest subgraph problem}.

We define the density of a graph $G = \langle V,E \rangle$
as $|V|/|E|$ (half the average degree of $G$).
Hence, the densest subgraph
of $G$ is induced on a subset of vertices $U \subseteq V$ such
that
\[
U \defeq \argmax_{S \subseteq V} \dfrac{|E(S)|}{|S|},
\]
where $E(S)$ denotes the set of edges in the subgraph of $G$
induced by $S$.

The following is a well-known LP formulation of the densest subgraph problem,
introduced in \cite{Charikar00},
which we denote using $\textsc{Primal}(G)$.
The optimal objective value is known to be $\rho_G^*$.

\begin{equation*}
\begin{array}{ll@{}ll}
\text{maximize}  & \displaystyle\sum\limits_{e \in E} & y_e &\\
\text{subject to} & & y_e \leq x_u,x_v,        & \forall e=uv \in E\\
& \displaystyle\sum\limits_{v \in V} & x_v \leq 1,\\
&                  & y_e \geq 0, x_v \geq 0,         & \forall e \in E, \forall v \in V
\end{array}
\end{equation*}

We then construct the dual LP for the above problem.
Let $f_{e}(u)$ be the dual variable associated with the first $2m$
constraints of the form $y_e \leq x_u$,
and let $D$ be associated with the last constraint.
We get the following LP, which we denote by $\textsc{Dual}(G)$,
and whose optimum is also $\rho_G^*$.

\begin{equation*}
\begin{array}{lr@{}ll}
\text{minimize}  & & D &\\
\text{subject to} & f_e(u) + & f_e(v) \geq 1, \qquad & \forall e=uv \in E\\
&\displaystyle\sum\limits_{e \ni v} & f_e(v) \leq D, & \forall v \in V\\
&                  & f_e(u) \geq 0, f_e(v) \geq 0,        & \forall e=uv \in E
\end{array}
\end{equation*}

Parametrizing with respect to $D$,
this becomes a mixed packing covering LP.
The solution to the densest subgraph problem is
simply the smallest value of $D$ for which the LP is feasible.
Since $D$ can take at most $O(|V||E|) \leq O(|V|^3)$ values in total,
the densest subgraph problem can be reduced
to solving $O(\log |V|)$ instances of MPC,
where the number of nonzeros $N$ in the matrix is $O(|E|)$
and the width $w$ is simply the maximum degree in $G$.
This gives the following corollary.

\begin{corollary} \label{cor:DSP}
Given a graph $G = \langle V,E \rangle$ with maximum degree $\Delta$,
we can find the $(1+\eps)$-approximation to the
maximum subgraph density of $G$, $\rho_G^*$,
in parallel time $\widetilde{O}(\Delta \eps^{-1})$
and total work $\widetilde{O}(\Delta |E| \eps^{-1})$.
\end{corollary}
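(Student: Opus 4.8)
The plan is to cast, for each fixed value of the parameter $D$, the feasibility version of $\textsc{Dual}(G)$ as a single mixed packing--covering instance, apply Theorem~\ref{thm:main}, and wrap the whole thing in a binary search over the $O(|V|\,|E|)$ candidate values of $\rho_G^*$.

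First I would fix $D$ and read off the MPC instance on the $2|E|$ nonnegative variables $\{f_e(u), f_e(v) : e = uv \in E\}$: the covering constraints are $f_e(u)+f_e(v)\ge 1$, one per edge, and the packing constraints are $\tfrac1D\sum_{e\ni v} f_e(v) \le 1$, one per vertex; the box constraints $f_e(v)\le 1$ are without loss of generality, since capping any feasible dual solution at $1$ preserves the covering constraints and only decreases the packing load. Values of $D$ outside the trivial range $[\tfrac12,\Delta]$ (recall $\rho_G^*\le\Delta/2$) need not be tested, and the ``entries at most one'' normalization of Appendix~\ref{sec:pb-normalization} absorbs the remaining rescaling at a logarithmic cost. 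The essential bookkeeping is the width count: the covering matrix has exactly two nonzeros per row, so $w_C = 2$; the packing matrix has exactly $\deg(v)$ nonzeros in the row of vertex $v$, so $w_P = \Delta$; hence $w_{\max} = \Delta$ when $\Delta \ge 2$ (the case $\Delta\le 1$ being trivial). The total nonzero count is $\sum_{v} \deg(v) + 2|E| = O(|E|)$, so $N = O(|E|)$. Substituting $w=\Delta$ and $N = O(|E|)$ into Theorem~\ref{thm:main} then yields, for each $D$, an $\eps$-approximate feasibility test running in parallel time $\widetilde O(\Delta\eps^{-1})$ and total work $\widetilde O(\Delta |E|\eps^{-1})$.

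Next I would translate the approximate verdict back to the density scale. If the solver returns an $\eps$-approximate $f$ at parameter $D$ (so $f_e(u)+f_e(v)\ge 1-\eps$ and $\sum_{e\ni v}f_e(v)\le(1+\eps)D$), then $f/(1-\eps)$ is exactly feasible for $\textsc{Dual}(G)$ with objective at most $\tfrac{1+\eps}{1-\eps}D \le (1+O(\eps))D$, so $\rho_G^* \le (1+O(\eps))D$; if the solver reports infeasibility, then $\textsc{Dual}(G)$ is \emph{exactly} infeasible at $D$, so $\rho_G^* > D$. Since $\rho_G^*$ is itself one of the $O(|V|\,|E|)$ rationals $a/b$ with $0\le a\le |E|$, $1\le b\le |V|$, a binary search over the sorted candidate list — $O(\log|V|)$ calls to the feasibility test — can be driven by these two one-sided certificates: an ``infeasible'' verdict at $D$ safely pushes the search to larger candidates, a ``feasible'' verdict at $D$ pushes it to smaller ones and records $D$. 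This pins down a value $\tilde\rho$ with $\rho_G^*/(1+O(\eps)) \le \tilde\rho \le \rho_G^*$, and rescaling $\eps$ by a constant turns this into the desired $(1+\eps)$-approximation. The $O(\log|V|)$ instances are absorbed into the $\widetilde O(\cdot)$, leaving parallel time $\widetilde O(\Delta\eps^{-1})$ and total work $\widetilde O(\Delta |E|\eps^{-1})$.

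There is no substantive obstacle here: the corollary is essentially a direct specialization of Theorem~\ref{thm:main} to the LP of \cite{Charikar00}. The only steps needing genuine care are the width count — verifying that the two-nonzero covering rows do not inflate $w_{\max}$ past $\Delta$ — and the bookkeeping that lets a plain binary search over the candidate densities cooperate cleanly with an oracle that only decides feasibility approximately (the one-sided certificates above are what make this go through, and it is worth checking that a near-collision of candidates within a $(1+O(\eps))$ factor is harmless).
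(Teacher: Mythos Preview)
Your proposal is correct and follows essentially the same approach as the paper: parametrize Charikar's dual by $D$, observe that the resulting MPC instance has $N = O(|E|)$ and width $\Delta$, invoke Theorem~\ref{thm:main}, and binary-search over the $O(|V||E|)$ candidate densities. You are in fact more careful than the paper's own argument, which leaves the width count, the box constraints, and the interaction between approximate feasibility and binary search entirely implicit.
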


The previous fastest algorithms for densest subgraph
do not depend on $\Delta$ - however,
their dependence on $1/\eps$ is quadratic \cite{BahmaniGM14}.
Corollary \ref{cor:DSP} gives the fastest algorithm for this problem
in the high precision regime ($\eps < 1/\Delta$), since its dependence
on $\eps^{-1}$ is only linear.
\end{appendix}

\end{document}